\newtheorem{theorem}{Theorem}[section]
\newtheorem{lemma}[theorem]{Lemma}
\newtheorem{conjecture}[theorem]{Conjecture}
\newtheorem{corollary}[theorem]{Corollary}
\theoremstyle{definition}
\newtheorem{definition}[theorem]{Definition}
\theoremstyle{remark}
\newtheorem{remark}[theorem]{Remark}
\numberwithin{equation}{section}
\DeclareMathOperator{\Gal}{Gal}
\DeclareMathOperator{\GL2}{GL_2}
\DeclareMathOperator{\SL2}{SL_2}
\DeclareMathOperator{\Tr}{Tr}
\DeclareMathOperator{\Frob}{Frob}
\begin{document}

% \title[short text for running head]{full title}
\title{On Fermat's equation over some quadratic imaginary number fields}

%    Only \author and \address are required; other information is
%    optional.  Remove any unused author tags.
%    author one information
% \author[short version for running head]{name for top of paper}
%\author{Samir Siksek}
%\address{University of Warwick, United Kingdom}
%\curraddr{}
%\email{}
%\thanks{}

%    author two information
\author{George Turcas}
\address{Mathematics Institute\\
	University of Warwick\\
	Coventry\\
	CV4 7AL \\
	United Kingdom}
%\curraddr{}
\email{g.c.turcas@warwick.ac.uk}
\thanks{The author is supported by
	an EPSRC PhD studentship. }

%    \subjclass is required.
\subjclass[2010]{Primary 11D41, Secondary 11F03, 11F80, 11F75}
\keywords{Fermat, Bianchi, Galois representation, Serre modularity}

\date{}

\dedicatory{}

%    Abstract is required.
\begin{abstract}
	Assuming a deep but standard conjecture in the Langlands programme, we prove  Fermat's Last Theorem over $\mathbb Q(i)$.  Under the same assumption, we also prove that, for all prime exponents $p \geq 5$, Fermat's equation $a^p+b^p+c^p=0$ does not have non-trivial solutions over $\mathbb Q(\sqrt{-2})$ and $\mathbb Q(\sqrt{-7})$.
\end{abstract}

\maketitle

\section{Introduction}

 Wiles' remarkable proof of Fermat's Last Theorem inspired mathematicians to attack the Fermat equation over number fields via elliptic curves and modularity. Successful attempts over totally real quadratic fields had been carried out by Jarvis and Meekin \cite{jarvis2004fermat}, Freitas and Siksek \cite{FreitasSiksek}, \cite{AFreitasSiksek} and they rely on progress in modularity over totally real fields due to work of Barnett-Lamb, Breuil, Diamond, Gee, Geraghty, Kisin, Skinner,
Taylor, Wiles and others. In particular, modularity of elliptic curves over real quadratic fields was proved by Freitas, Le Hung and Siksek \cite{Freitas2015}.

On the other hand, modularity of elliptic curves over number fields with complex embeddings is highly conjectural. For general number fields, \c Seng\" un and Siksek \cite{ASiksek} proved an asymptotic version of Fermat's Last Theorem, under the assumption of two standard, but very deep conjectures in the Langlands programme.  They prove that for a number field $K$, satisfying some special properties, there exists a constant $B_K$, depending only on the field $K$, such that for all primes $p>B_K$, the equation
$$a^p + b^p + c^p = 0,$$
does not have solutions in $K \setminus \{0\}$.

The present work follows the skeleton of their article. Specialising to the fields $\mathbb Q(i), \mathbb Q(\sqrt{-2})$ and $\mathbb Q(\sqrt{-7})$, we were able to make their result effective and to obtain optimal bounds for the constant $B_K$. Combining this with previous work on the Fermat equation over number fields, was sufficient to derive Theorem \ref{main} below. Conjecture 4.1 in \cite{ASiksek} is known to hold for base-change newforms via the theory of base change functoriality, therefore we obtain a result that is dependant only on Serre's modularity conjecture (see Conjecture \ref{serreconj} below).

 Throughout this paper, given a number field $K$, we are going to denote by $\mathcal O_K$ its ring of integers and by $G_K=\Gal(\overline K/K)$ its absolute Galois group. We should emphasize some of the difficulties that had to be overcome.
\begin{enumerate}
\item	First, we had to obtain a small enough constant $B_K$ such that for $p> B_K$ the mod $p$ Galois representation that comes from a Frey curve is absolutely irreducible. This is treated asymptotically in \cite{ASiksek}, but we had to rely on upper bounds for possible prime torsion that can be achieved by  elliptic curves defined over number fields of small degree and class field theory computations to make the bound $B_K$ explicit. We should point out that the class field theory computations are different in an essential way from the ones that had been done in \cite{KrausQ} and \cite{FreitasSiksek}. The latter rely on the presence of non-trivial units in $\mathcal O_K$, which are not available in our setting.

\item To keep this introduction short, we defer the precise definitions of eigenforms and mod $p$ eigenforms to Section 2. Serre's modularity conjecture relates certain representations $G_K \to \GL2(\overline{\mathbb F}_p)$ to a mod $p$ eigenform of weight $2$ over $K$. In the classical situation Serre's modularity conjecture, now a theorem due to Khare and Wintenberger, relates some representations $G_{\mathbb Q} \to \GL2(\overline{\mathbb F}_p)$ to a complex eigenform of weight $2$ over $\mathbb Q$. The expression ``mod $p$" does not appear there, because it turns out that all mod $p$ eigenforms over $\mathbb Q$ are just reductions of complex eigenforms. This is not the case if $K$ is imaginary quadratic, but \c Seng\" un and Siksek \cite{ASiksek} managed to circumvent this difficulty by assuming that the prime $p$ is large enough. Unfortunately this step makes their result ineffective. For our choices of $K$, we  made this step effective by computing the torsion in certain cohomology groups using an algorithm of \c Seng\" un. This allowed us to lift the mod $p$ eigenforms to complex ones. 

\item A complex weight $2$ newform over $K$ with rational eigenvalues corresponds conjecturally to an elliptic curve or a fake elliptic curve defined over $K$ (see \cite{ASiksek}). The theory of base change functoriality shows that this conjecture holds for  the newforms that are base-change lifts of classical newforms. By explicit computation we find that all the newforms that we need to deal with are base-change lifts.

%We avoid relying on this conjecture by using the theory of lifting forms from $\mathbb Q$ to $K$ to show that the eigenforms we stumble upon actually correspond to elliptic curves over $K$ that are base-changes of elliptic curves over $\mathbb Q$. 
	\end{enumerate}

Our results assume a version of Serre's modularity conjecture (see Conjecture \ref{serreconj}) for odd, irreducible, continuous $2$-dimensional mod $p$ representations of $\Gal(\overline{\mathbb Q}/K)$ that are finite flat at every prime over $p$.

From now on, we restrict ourselves to $K \in \{ \mathbb Q(i), \mathbb Q(\sqrt{-2}), \mathbb Q(\sqrt{-7}) \}$ and we justify this restriction to the reader.

One reason is that we had to carry out some explicit computations in the cohomology groups of locally symmetric spaces (see Section 2) that were much simpler when the field had class number $1$.

	To carry out our argument and prove that there are no solutions to Fermat's equation, we had to prove that Galois representations associated to Frey curves are absolutely irreducible. In doing so, we made use of the fact that $K$ has primes of residual degree $1$ above $2$ so we end up with the three number fields above. On top of that, one should be aware of the existence of solutions $1^p +  (\omega)^p + (\omega^2)^p =0$, where $\omega$ is a primitive third root of unity  and $p$ is an odd prime. These are defined over $\mathbb Q(\sqrt{-3})$, making a proof of Fermat's Last Theorem over $\mathbb Q(\sqrt{-3})$ hopeless.
 
  \begin{theorem} \label{main} Assume Conjecture \ref{serreconj} holds for $K$. If $p \geq 5$ is a rational prime number, then the equation	
	\begin{equation} \label{maineq} a^p + b^p + c^p = 0 \end{equation}
has no solutions $a,b,c \in K\setminus \{ 0\}$.
\end{theorem}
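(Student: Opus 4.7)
The plan is to follow the Frey-curve/modularity strategy of Wiles, adapted to imaginary quadratic fields by \c Seng\"un and Siksek. Suppose for contradiction that $(a,b,c) \in (K\setminus\{0\})^3$ is a solution of \eqref{maineq}; after clearing denominators and scaling, assume $a,b,c \in \mathcal{O}_K$ are coprime. Attach to this solution the Frey elliptic curve $E/K$ with Weierstrass equation $y^2 = x(x-a^p)(x+b^p)$ and study the residual representation $\bar\rho_{E,p}\colon G_K \to \GL2(\overline{\mathbb F}_p)$.

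The first step is a conductor/ramification analysis of $E$. A careful study of the minimal model at primes above $2$---exploiting that each of our three fields has a prime of residual degree one above $2$, so $E$ behaves at that prime much like the rational Frey curve at $2$---together with the standard analysis away from $2$ and $p$, shows that the conductor of $\bar\rho_{E,p}$ is supported on a fixed set of primes independent of $p$, and that $\bar\rho_{E,p}$ is finite flat at every prime above $p$. The second step, which I expect to be the main obstacle, is to prove that $\bar\rho_{E,p}$ is absolutely irreducible for $p \geq 5$. The argument from \cite{KrausQ} and \cite{FreitasSiksek} exploits the non-trivial units of the real quadratic ring of integers, which are absent in our setting; one must replace it with a combination of effective bounds on possible prime torsion of elliptic curves over number fields of degree up to $2$ and explicit ray class group computations in $K$. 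It is precisely the success of these class field theory computations that forces the restriction to $K \in \{\mathbb Q(i), \mathbb Q(\sqrt{-2}), \mathbb Q(\sqrt{-7})\}$.

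Having secured irreducibility and flatness, the third step is to apply Serre's modularity conjecture (Conjecture \ref{serreconj}), producing a mod $p$ eigenform $\theta$ over $K$ of weight $2$ and level dividing the fixed ideal $\mathfrak n$ from step one. The fourth step is to lift $\theta$ to a characteristic zero eigenform. Unlike over $\mathbb Q$, this is a genuine obstruction, since the cohomology of the relevant Bianchi locally symmetric spaces carries torsion. Using \c Seng\"un's algorithm, compute the integral cohomology at the finite list of allowed levels for each of the three fields and verify directly that either the torsion vanishes at the primes $p$ in question or it does not obstruct lifting; this is how the asymptotic step of \cite{ASiksek} is made effective in our setting, producing a complex weight $2$ newform $\mathfrak f$ over $K$ with rational Hecke eigenvalues and the same mod $p$ Galois representation as $E$.

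Finally, since $\mathfrak f$ has rational eigenvalues, it corresponds conjecturally to an elliptic curve or a fake elliptic curve over $K$. By explicitly tabulating all newforms of levels dividing $\mathfrak n$ for each $K$ and checking that each one is a base change from $\mathbb Q$, one invokes base change functoriality (Conjecture $4.1$ of \cite{ASiksek}, known in this case) to attach to $\mathfrak f$ an honest elliptic curve $E'/\mathbb Q$ whose base change to $K$ satisfies $\bar\rho_{E,p} \sim \bar\rho_{E'/K,p}$. A finite case analysis, comparing traces of Frobenius at a few auxiliary primes and images of inertia at primes of bad reduction, eliminates every candidate $E'$ and completes the contradiction. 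The crux of the whole argument remains the explicit absolute irreducibility step, because it is both the technical obstacle requiring new input beyond the totally real case and the reason we are confined to these three fields.
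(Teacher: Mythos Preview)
Your outline matches the paper's strategy closely, but there is a genuine gap: the modular method does \emph{not} reach down to $p=5$. The irreducibility step you flag as ``the main obstacle'' indeed is one, and it is not overcome for small $p$. The paper's irreducibility argument runs as follows: if $\bar\rho_{E,p}$ is reducible with diagonal characters $\theta,\theta'$, a conductor computation forces $\theta$ (after possibly passing to a $p$-isogenous curve) to factor through a ray class group of explicit small order ($1$, $2$, or $4$, depending on $K$). Then $E$, or a quadratic twist of $E$ over a quadratic extension of $K$, acquires a rational $p$-torsion point over a field of degree at most $4$ over $\mathbb Q$. Invoking the torsion bounds of Kamienny (degree $2$) and Derickx--Kamienny--Stein--Stoll (degree $4$), this gives only $p\le 13$ or $p\le 17$, so irreducibility is established only for $p\ge 17$ when $K=\mathbb Q(\sqrt{-7})$ and $p\ge 19$ when $K\in\{\mathbb Q(i),\mathbb Q(\sqrt{-2})\}$. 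Your proposal to use torsion bounds ``over number fields of degree up to $2$'' is too weak (the character can have order up to $4$), and in any event no such argument will cover $p\in\{5,7,11,13\}$.

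The paper fills this gap by citing classical, unconditional results: Gross--Rohrlich for $p=5,7,11$, Tzermias for $p=13$, and Hao--Parry for $p=17$ (the last needed only for $\mathbb Q(i)$ and $\mathbb Q(\sqrt{-2})$). Without these, your argument establishes the theorem only for $p\ge 17$ or $p\ge 19$, not for $p\ge 5$. Everything else in your outline---the conductor analysis, the torsion computation in cohomology to lift the mod $p$ eigenform, the identification of the surviving newforms over $\mathbb Q(\sqrt{-2})$ as base-change and their elimination at an auxiliary prime---is correct and essentially what the paper does.
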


Work is in progress to prove similar statements to Theorem \ref{main} for the other imaginary quadratic fields of class number one, but in those cases at the moment we are just able to prove that there are no solutions to \eqref{maineq} such that $Norm(abc)$ is even. We also hope to extend the methods to prove similar statements for other quadratic imaginary fields of non-trivial class number in the spirit of \cite{FreitasSiksek}.

\begin{corollary} \label{maincor} Assume Conjecture \ref{serreconj} holds for $\mathbb Q(i)$. Then, Fermat's Last Theorem holds over $\mathbb Q(i)$. In other words, for any integer $n \geq 3$, the equation
	$$a^n+b^n=c^n$$
has no solution $a,b,c \in \mathbb Q(i) \setminus \{ 0\}$. 
	
	\end{corollary}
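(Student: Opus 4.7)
The plan is to reduce Fermat's Last Theorem over $\mathbb{Q}(i)$ to three separate exponents and apply different tools to each. Any integer $n \geq 3$ is divisible either by an odd prime $p$ or by $4$; moreover, if $(a,b,c) \in (\mathbb{Q}(i)^{\times})^{3}$ solves $a^{n}+b^{n}=c^{n}$ with $n=mk$, then $(a^{m},b^{m},c^{m})$ is a nontrivial solution for exponent $k$. It therefore suffices to rule out nontrivial solutions of $a^{n}+b^{n}=c^{n}$ over $\mathbb{Q}(i)\setminus\{0\}$ for $n=4$ and for every odd prime exponent $p\geq 3$.

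For each prime $p\geq 5$, I would rewrite $a^{p}+b^{p}=c^{p}$ as $a^{p}+b^{p}+(-c)^{p}=0$ (legitimate because $p$ is odd) and invoke Theorem \ref{main} with $K=\mathbb{Q}(i)$, which immediately gives the conclusion. The remaining small exponents $p=3$ and $n=4$ predate the modularity machinery and are handled by classical means. For $n=3$, the Fermat cubic $X^{3}+Y^{3}+Z^{3}=0$ defines an elliptic curve $E$ of $j$-invariant $0$ (Cremona label 27a1), and one would compute $E(\mathbb{Q}(i))$ directly: since $E$ has CM, it is enough to check that both $E/\mathbb{Q}$ and its quadratic twist by $\mathbb{Q}(i)/\mathbb{Q}$ have Mordell--Weil rank $0$, so that $E(\mathbb{Q}(i))$ is pure torsion. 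The only $\mathbb{Q}(i)$-points then turn out to be $(1:-1:0),\ (1:0:-1),\ (0:1:-1)$, none of which corresponds to a solution with $abc\neq 0$. For $n=4$, a descent over the Gaussian integers $\mathbb{Z}[i]$, essentially due to Hilbert, rules out all nontrivial solutions of $X^{4}+Y^{4}=Z^{4}$ in $\mathbb{Q}(i)$.

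The substantive content is contained in Theorem \ref{main}; the corollary is then a bookkeeping exercise combining that theorem with the two classical small-exponent results. The main obstacle, such as it is, lies in the explicit rank-and-torsion verification for the Fermat cubic over $\mathbb{Q}(i)$, but since the curve has CM by $\mathbb{Z}[\zeta_{3}]$ and its isogeny class is completely understood, this step is routine with a computer algebra system such as Magma or Sage.
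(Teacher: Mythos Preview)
Your argument is correct and essentially identical to the paper's: both reduce to $n=4$ and odd prime exponents, invoke Theorem~\ref{main} for $p\ge 5$, compute the Mordell--Weil group of the Fermat cubic over $\mathbb{Q}(i)$ for $p=3$ (the paper simply reports it is $\mathbb{Z}/3\mathbb{Z}$ via Magma), and cite a classical result---the paper uses Aigner~\cite{aigner} rather than Hilbert---for $n=4$. One minor point: the rank decomposition $\operatorname{rank} E(\mathbb{Q}(i))=\operatorname{rank} E(\mathbb{Q})+\operatorname{rank} E^{\chi}(\mathbb{Q})$ holds for any elliptic curve over $\mathbb{Q}$ and quadratic extension, so ``since $E$ has CM'' is not the correct justification for that step.
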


The reader might now legitimately ask themselves why we were only able to derive this corollary for the case $K= \mathbb Q(i)$ and we will try to explain this now. In fact, our techniques prove the result claimed in Theorem \ref{main} for $p \geq 19$ when $K \in \{ \mathbb Q(i), \mathbb Q(\sqrt{-2}) \}$ and $p \geq 17$ when $K=\mathbb Q(\sqrt{-7})$ and we rely on previous work on Fermat's Last Theorem over number fields to conclude the same result for small values of $p$.
To be precise, a result of Gross and Rohrlich \cite{grossroh}*{Theorem 5.1} implies that there are no solutions to \eqref{maineq} for $p=5,7,11$ and our three choices of $K$. The case $p=13$ is covered completely by work of Tzermias \cites{tzermias}.

 Hao and Parry \cite{haoparry} also worked on Fermat's equation over quadratic fields and one of their results \cite{haoparry}*{Theorem 4} can be used to complete the remaining cases $p=17$ and $K \in \{ \mathbb Q(i), \mathbb Q(\sqrt{-2}) \}$ of our Theorem \ref{main}. All these works do not depend on the aforementioned conjecture of Serre.
 
 It is worth pointing out that if one knows that none of the equations $a^p +b^p + c^p =0$ for $p \geq 3$ prime and $a^4+b^4=c^4$ has a solution in $(a,b,c) \in (K\setminus \{0\})^3$, then one can deduce results analogous to Corollary \ref{maincor} for $K$. In fact that is how the corollary is proved.
 
 The equation $a^3+b^3+c^3=0$ describes a curve of genus $1$ and can be seen as an elliptic curve over $K$ after a choice of base-point. This elliptic curve has Mordell-Weil group isomorphic to $\mathbb Z/ 3 \mathbb Z$ when $K \in \{ \mathbb Q(i), \mathbb Q(\sqrt{-7}) \}$. The torsion points correspond to the trivial solutions (i.e. one of $a,b,c$ is $0$) to Fermat's equation under reparameterization. The situation is a little bit different when $K= \mathbb Q(\sqrt{-2})$, because the elliptic curve in play has rank $1$ and the non-torsion points on this curve correspond to infinitely many solutions $(a,b,c) \in \mathbb Q(\sqrt{-2})^{*}$ to $a^3+b^3+c^3=0$. All the computations here have been carried with the elliptic curve packages available in Magma \cite{magma}. For the equation $a^4+b^4=c^4$, we are going to refer to the work of Aigner \cite{aigner} who proved that the only non-trivial solutions to this equation over quadratic imaginary fields are defined over $\mathbb Q(\sqrt{-7})$. Note that $(1+ \sqrt{-7})^4+ (1-\sqrt{-7})^4 = 2^4$ is one of them. One can see now how the above discussion together with Theorem \ref{main} imply the corollary and also why such a corollary cannot be achieved for any of the other two fields that are covered in the theorem.
 
 \subsection*{Acknowledgements}
 
 We are indebted to the referees for their careful reading of this paper and for many helpful remarks. The author is very grateful to his advisor Samir Siksek for suggesting the problem and for his great support. It is also a pleasure to thank John Cremona and Haluk \c Seng\" un for useful discussions.

\section{Eigenforms for $\GL2$ over $K$ and Serre's modularity conjecture}

The exposition in this section follows closely along the lines of \cite{jonessen} and \cite{ASiksek}. A celebrated theorem of Khare and Wintenberger connects certain $2$-dimensional continuous representations of $G_{\mathbb Q}$ into $\GL2(\overline {\mathbb F}_p)$ with classical modular forms of the hyperbolic plane $\mathcal H_2$. In this section we are going to discuss a conjecture that aims to generalise the theorem of Khare and Wintenberger in a way that is going to be soon clarified.

Throughout this section $K$ is an imaginary quadratic field of class number $1$. Let $G=Res_{K/\mathbb Q}( \GL2)$ be the algebraic group over $\mathbb Q$ that is obtained from $\GL2$ by restriction of scalars from $K$ to $\mathbb Q$. The group of real points $G(\mathbb R) = \GL2(K \otimes_{\mathbb Q} \mathbb R) \cong \GL2(\mathbb C)$ acts transitively on $\mathcal H_3$. Fix an ideal $\mathfrak N \subset \mathcal O_K$ and consider the locally (adelic, see \cite{gunnells2014}) symmetric space
$$Y_0(\mathfrak N) = \Gamma_0(\mathfrak N) \backslash \mathcal H_3,$$
where $\Gamma_0(\mathfrak N)= \left\{  \left( \begin{array}{cc} a & b \\ c & d \end{array} \right) \in \GL2(\mathcal O_K) \mid c \in \mathfrak N \right\}$ is the usual congruence subgroup for the modular group $\GL2(\mathcal O_K)$.

Let $p$ be a prime that does not ramify in $K$ and is coprime to $\mathfrak N$. The cohomology groups $H^{i}(Y_0(\mathfrak N), \overline{\mathbb F}_p)$ come equipped with commutative Hecke algebras $\mathbb T_{\mathbb F_p}^i(\mathfrak N)$, generated by Hecke operators $T_{\mathfrak \pi}$ associated to the prime ideals $(\pi)$ of $\mathcal O_K$ away from $p \mathfrak N$. 

By a  mod $p$ eigenform $\Phi$ (over $K$) of level $\mathfrak N$ and degree $i$ we refer to a ring homomorphism $\Phi: \mathbb T_{\mathbb F_p}^i(\mathfrak N) \to \overline{\mathbb F}_p$.
It is known that the values of a mod $p$ eigenform generate a finite extension $\mathbb F$ of $\mathbb F_p$. Two such mod $p$ eigenforms of levels $\mathfrak N, \mathfrak M$ are said to be equivalent if their values agree on the Hecke operators associated to prime ideals away from $p \mathfrak N \mathfrak M$. 

Consider the complex cohomology groups $H^{i}(Y_0(\mathfrak N), \mathbb C)$. For all prime ideals $(\pi) \subset \mathcal O_K$, we can construct linear endomorphisms $T_{\pi}$ of $H^i(Y_0(\mathfrak N), \mathbb C)$, called Hecke operators. They form a commutative Hecke algebra denoted $\mathbb T^i_{\mathbb C}(\mathfrak N)$. 
By a complex eigenform $f$ over $K$ of degree $i$ and level $\mathfrak N$ we mean a ring homomorphism $f:\mathbb T^i_{\mathbb C}(\mathfrak N) \to \mathbb C$. The values of $f$ are algebraic integers and they generate a number field $\mathbb Q_f$. Two eigenforms $f,g$ are equivalent if their values agree on the Hecke operators. One should notice that since two equivalent eigenforms can be of different levels, the Hecke operators on which they agree might live in different Hecke algebras. A complex eigenform, say of level $\mathfrak N$, is called new if it is not equivalent to one whose level is a proper divisor of $\mathfrak N$. A complex eigenform $f$ is called trivial if $f(T_{\pi})=N_{K/\mathbb Q}(\pi) +1$ for all prime ideals $(\pi)$ of $\mathcal O_K$. In the setting of $\GL2$, non-triviality amounts to cuspidality.

We have to point out that the virtual cohomological dimension of $Y_0(\mathfrak N)$ is $2$ and for $p \geq 5$, the cuspidal parts of $H^1(Y_0(\mathfrak N), \overline{\mathbb F}_p)$ and $H^2(Y_0(\mathfrak N), \overline{\mathbb F}_p)$  are isomorphic as Hecke modules and the same holds for the cuspidal parts of $H^1(Y_0(\mathfrak N), \mathbb C))$ and $H^2(Y_0(\mathfrak N), \mathbb C)$. The latter follows from a duality result of Ash and Stevens \cite{ash1986}*{Lemma 1.4.3}. Therefore, in our setting, any mod $p$ or complex eigenform is equivalent to one of the same level and degree $1$. This particular case of a more general conjecture of Calegari and Emerton (see \cite{calegari2011completed}) holds in the present situation due to the low dimension.

Since everything happens in degrees $1$, $2$ and the non-trivial parts of the corresponding Hecke algebras are isomorphic, we are going to forget about the upper script $i$ and write $\mathbb T_{\mathbb F_p}(\mathfrak N)$ respectively $\mathbb T_{\mathbb C}(\mathfrak N)$, sometimes referring to the non-trivial part of the degree one Hecke algebra and sometimes referring to the one of degree two. The eigenforms in these algebras are sometimes called mod $p$ Bianchi modular forms, respectively Bianchi modular forms in the literature.

It is discussed in \cite{crewhit} how, via the Eichler-Shimura-Harder isomorphism, Bianchi modular forms can be analytically interpreted as vector valued real-analytic functions on the hyperbolic 3-space. We are going to be using this point of view in our examination of the Mellin transform of a Bianchi modular form in Section 5.

Given a complex eigenform $f$ and a prime $p$ that is coprime to the level of $f$, one can fix a prime ideal $\mathfrak p$ of $\mathbb Q_f$ that lies above $p$ and obtain a mod $p$ eigenform $\Phi_f$ (of the same level and degree) by setting $\Phi_f(T_{\pi})= f(T_{\pi}) \pmod{\mathfrak p} $ for all prime ideals $(\pi)$ coprime to $p\mathfrak N$. We say that a mod $p$ eigenform $\Phi$ lifts to a complex one if there is a complex eigenform $f$ with the same level and degree such that $\Phi=\Phi_f$.

Unlike the classical situation in which $K=\mathbb Q$, when $K$ is quadratic imaginary not all the mod $p$ eigenforms lift to complex ones. To see this, consider the following short exact sequence given by multiplication-by-$p$
\begin{center}
\begin{tikzcd}	
	0 \arrow[r] & \mathbb Z \arrow[r, "\times p"]  & \mathbb Z \arrow[r]  & \mathbb F_p \arrow[r] & 0
\end{tikzcd}.
	\end{center}
This gives rise to a long exact sequence on cohomology
\begin{center}
	\begin{tikzcd}
		\dots H^1(Y_0(\mathfrak N), \mathbb Z) \arrow[r,"\times p"] & H^1(Y_0(\mathfrak N), \mathbb Z) \arrow[r] \arrow[d, phantom, ""{coordinate, name=Z}] & H^1(Y_0(\mathfrak N), \mathbb F_p) \arrow[dll,
		"\delta",
		rounded corners,
		to path={ -- ([xshift=2ex]\tikztostart.east)
			|- (Z) [near end]\tikztonodes
			-| ([xshift=-2ex]\tikztotarget.west)
			-- (\tikztotarget)}] \\ H^2(Y_0(\mathfrak N), \mathbb Z) \arrow[r] & \dots
		\end{tikzcd}
\end{center}
from which we can extract the short exact sequence
\begin{center}
\begin{tikzcd}[column sep = small]
	0 \arrow[r] & H^1(Y_0(\mathfrak N), \mathbb Z) \otimes \mathbb F_p \arrow[r] & H^1(Y_0(\mathfrak N), \mathbb F_p) \arrow[r, "\delta"] & H^{2}(Y_0(\mathfrak N), \mathbb Z)[p] \arrow[r] & 0
	\end{tikzcd}.	
	
	\end{center}

In the above, the presence of $p$-torsion in $H^2(Y_0(\mathfrak N), \mathbb Z)$ is the obstruction to surjectivity in the reduction map. The existence of an eigenform (complex or mod $p$) is equivalent to the existence of a class in the corresponding cohomology group that is a simultaneous eigenvector for the Hecke operators such that its eigenvalues match the values of the eigenform. With this interpretation, we can deduce that every mod $p$ eigenform of degree $1$ lifts to a complex one when $H^{2}(Y_0(\mathfrak N), \mathbb Z)$ has no $p$-torsion.

We will be using a special case of Serre's modularity conjecture over number fields. In his landmark paper \cite{Serre}, Serre  conjectured that all absolutely irreducible, odd mod $p$ Galois representation of $G_{\mathbb Q}=\Gal(\overline{\mathbb Q}/ \mathbb Q)$ \textit{arise} from a cuspidal eigenform $f$. Here $f$ is a classical cuspidal modular form. In the same article, Serre gave a recipe for the level $N$ and the weight $k$ of the sought after eigenform. As previously mentioned, this conjecture was proved by Khare and Wintenberger \cite{Khare2009}.

We are going to state a conjecture which concerns mod $p$ representations of $G_K$, the absolute Galois group of the quadratic imaginary $K$. 

\begin{conjecture} \label{serreconj} Let $\overline \rho : G_K \to \GL2(\overline{\mathbb F}_p)$ be an irreducible, continuous representation with Serre conductor $\mathfrak N$ (prime-to-$p$ part of its Artin conductor) and trivial character (prime-to-$p$ part of $\det(\overline \rho)$). Assume that $p$ is unramified in $K$ and that $\overline \rho|_{G_{K_{\mathfrak p}}}$ arises from a finite-flat group scheme over $\mathcal O_{K_{\mathfrak p}}$ for every prime $\mathfrak p|p$. Then there is a mod $p$ eigenform $\Phi : \mathbb T_{\mathbb F_p}\left(Y_0(\mathfrak N) \right) \to \overline{\mathbb F}_p$ such that for all prime ideals $(\pi) \subseteq \mathcal O_K$, coprime to $p \mathfrak N$
	$$Tr ( \overline \rho( \Frob_{ (\pi) }) ) = \Phi(T_{\pi}).$$
\end{conjecture}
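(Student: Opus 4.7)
The plan is to adapt the Khare--Wintenberger strategy for Serre's conjecture over $\mathbb Q$ to the imaginary quadratic setting. The three pillars of that strategy are: (i) the construction, for every cuspidal Bianchi eigenform (classical or mod $p$), of an associated $2$-dimensional Galois representation of $G_K$ with matching Hecke eigenvalues; (ii) a modularity lifting theorem of Taylor--Wiles--Kisin type, upgrading residual modularity of $\overline\rho$ to modularity of a suitable $p$-adic lift; and (iii) an induction through compatible systems together with level/weight optimization (Ribet--Jarvis style) to match the predicted Serre conductor $\mathfrak N$ and the finite-flat condition at $p$. The output of the argument is a mod $p$ eigenform $\Phi$ of level $\mathfrak N$ realizing the Hecke eigenvalues $\Tr(\overline\rho(\Frob_{(\pi)}))$.

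For (i), I would invoke the Harris--Lan--Taylor--Thorne construction to attach Galois representations to Bianchi cusp forms that lift to characteristic zero (realizing $K$ inside a CM field of higher degree and $p$-adically interpolating in a Harris--Lan--Taylor tower), and Scholze's construction to attach Galois representations to torsion classes in $H^{\ast}(Y_0(\mathfrak M),\overline{\mathbb F}_p)$ for mod $p$ forms that do not lift. For (ii), the natural framework is that of Calegari--Geraghty: because the defect $\ell_0$ of the locally symmetric space $Y_0(\mathfrak N)$ equals $1$, one patches a complex of Hecke modules rather than a single one, and the resulting $R=\mathbb T$ theorem requires vanishing of certain Tor groups, which in turn needs an ``enormous image'' hypothesis on $\overline\rho|_{G_{K(\zeta_p)}}$. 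The assumption that $\overline\rho$ is finite flat at each $\mathfrak p\mid p$ and has trivial character away from $p$ is exactly the local condition under which the local crystalline deformation ring at $p$ is formally smooth of the correct dimension, so the global deformation ring $R$ should be a finite complete intersection over $\mathcal O$. For (iii), residual modularity of $\overline\rho$ would be fed in via a potential modularity argument over a soluble CM extension in the spirit of the ten-author paper of Allen, Calegari, Caraiani, Gee, Helm, Le Hung, Newton, Scholze, Taylor and Thorne, followed by cyclic base change descent; level optimization then trims away the auxiliary Taylor--Wiles primes down to the prescribed Serre conductor $\mathfrak N$.

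The main obstacle---and the reason this conjecture remains open in the stated generality---is the modularity lifting step. Current results over imaginary quadratic fields require hypotheses (enormous residual image, potential diagonalizability at $p$, or availability of a soluble base change in which the form already becomes visibly automorphic) that are not present in the bare statement of Conjecture \ref{serreconj}. A second, essentially parallel difficulty is establishing fine local--global compatibility for the Galois representations produced by Scholze's construction at primes dividing the level: without it one cannot carry out the level-lowering needed to land precisely at $\mathfrak N$, only at some multiple. An honest completion of this proposal would therefore need either to prove these two inputs---modularity lifting and integral local--global compatibility---in the required generality, or to weaken the hypotheses of the conjecture to match the state of the art; this is why the result is used in the present paper as a hypothesis rather than proved.
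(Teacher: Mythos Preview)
The statement in question is labeled and used in the paper as a \emph{conjecture}, not a theorem; the paper offers no proof and instead assumes it as a standing hypothesis for the main results (see the very title of Theorem~\ref{main}: ``Assume Conjecture~\ref{serreconj} holds for $K$''). So there is no ``paper's own proof'' to compare your proposal against.

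Your write-up is not a proof either, and to your credit you say so explicitly in the last paragraph: the Calegari--Geraghty modularity lifting step and the integral local--global compatibility for Scholze's torsion Galois representations are genuine open inputs in this generality, and without them the argument cannot be closed. What you have written is a reasonable survey of the expected architecture of an eventual proof (Galois representations attached to Bianchi and torsion classes, $R=\mathbb T$ with defect $\ell_0=1$, potential modularity over a solvable CM extension plus descent, level optimization), but none of the steps is actually carried out. In particular, invoking the ten-author paper does not resolve the issue: that work proves potential automorphy results under hypotheses not implied by the bare assumptions of Conjecture~\ref{serreconj}, and the descent plus level-lowering to the exact Serre conductor $\mathfrak N$ over $K$ is not available.

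In short: the paper treats this as an assumption, and your proposal correctly ends by acknowledging the same. There is no gap to flag in the sense of a mistaken step; the proposal simply does not constitute a proof, and the paper never claimed one.
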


\begin{remark}
	When stated for more general number fields, this conjecture restricts to odd representations. A representation is odd if the determinant of every complex conjugation is $-1$, but since our $K$ is totally complex $G_K$ does not contain any complex conjugations and we will regard every mod $p$ representation of $G_K$ automatically as odd. 
	\end{remark}

Although it is conjecturally easy to predict the level $\mathfrak N$ of such an eigenform, doing the same thing for the weight can be very difficult. A quite involved general weight recipe for $\GL2$ over number fields was given by Gee, Herzig and Savitt in \cite{GeeHerzSav}. We will just mention that this recipe depends on the restriction $\overline{\rho}|_{I_{\mathfrak p}}$ to the inertia subgroups for the primes $\mathfrak p \subset \mathcal O_K$ above $p$.  We only considered very special representations $\overline{\rho}$ (that are finite flat at $\mathfrak p|p$), for which Serre's original weight recipe applies and predicts the trivial weight \cite{Serre}. This is why we end up with classes in $H^1(Y_0(\mathfrak N),  \overline{\mathbb F}_p)$, the trivial weight meaning that we get $\overline{\mathbb F}_p$ as coefficient module.

\section{Fermat equation with exponent $p$ and the Frey Curve}

Let us fix some notation:

$p$ - a rational prime number;

$K$ is one of $\mathbb Q(i)$, $\mathbb Q(\sqrt{-2})$ or $\mathbb Q(\sqrt{-7})$;

$\mathcal O_K$ - the ring of integers of $K$;

$S$ - set of prime ideals of $\mathcal O_K$ that lie above the prime $2$. 

\vspace{0.5cm}

By the Fermat equation with exponent $p$ over $K$, we mean
\begin{equation} \label{fermeq}
a^p + b^p + c^p=0, \hspace{1cm} a,b,c \in \mathcal O_K.
\end{equation}

We say that a solution $(a,b,c) \in \mathcal O_K^3$ is trivial if $abc = 0$ and non-trivial otherwise. We shall henceforth assume that $p \geq 17.$ Note that any solution $(a,b,c)$ of \eqref{main} satisfying $abc \neq 0$ can be scaled such that $a,b,c$ become integral and the triple gives a non-trivial solution to \eqref{fermeq}.

 The rational prime $2$ is ramified in the first choices of $K$ and completely split in $\mathbb Q(\sqrt{-7})$. In particular, the residue field of the primes in $S$ is always $\mathbb F_2$, and this is essential in our argument. Let $(a,b,c) \in \mathcal O_K^3$ be a non-trivial solution to the Fermat equation \eqref{fermeq}. Since the class number of $K$ is $1$, we can scale the solution such that $a,b$ and $c$ are coprime. Associated to $(a,b,c)$ is the Frey curve
\begin{equation} \label{freycurve}
E=E_{a,b,c}: Y^2 = X(X-a^p)(X+b^p).
\end{equation}

We write $\overline{\rho}=\overline{\rho}_{E,p}$ for the residual Galois representation
$$\overline{\rho}_{E,p}: G_K \to Aut(E[p]) \cong \GL2(\mathbb F_p )$$
induced by the action of $G_K$ on the $p$-torsion of $E[p]$. 

From Lemma 3.7 of \cite{AFreitasSiksek} it follows that $E$ has potentially multiplicative reduction at $\mathfrak a \in S$ and that $p \mid \# \overline{\rho}(I_{\mathfrak a})$, where $I_{\mathfrak a}$ denotes the inertia subgroup of $G_K$ at $\mathfrak a$. This information is crucial for everything that follows.

The following is a particular case of \cite{ASiksek}*{Lemma 5.4}. Since it is central to our discussion, we include a proof here.

\begin{lemma} \label{semistab} The Frey curve $E$ is semistable away from $S$. The determinant of $\overline{\rho}$ is the mod $p$ cyclotomic character. Its Serre conductor $\mathcal{N}$ is supported on $S$ and belongs to a finite set that depends only on the field $K$. The representation $\overline{\rho}$ is finite flat at every prime $\mathfrak p$ of $K$ that lies above $p$.
	\end{lemma}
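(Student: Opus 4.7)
The plan is to analyse the Frey curve $E$ prime-by-prime, using the standard formulas
\begin{equation*}
c_4(E) = 16(a^{2p}+a^p b^p + b^{2p}), \qquad \Delta(E) = 16\,(abc)^{2p},
\end{equation*}
together with the coprimality of $a,b,c$ and the fact that every prime $\mathfrak{q}\notin S$ is coprime to $2$.

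For semistability away from $S$, I would take $\mathfrak{q}\notin S$. If $\mathfrak{q}\nmid abc$, then $\mathfrak{q}\nmid\Delta$, so $E$ has good reduction. If $\mathfrak{q}\mid abc$, then coprimality forces $\mathfrak{q}$ to divide exactly one of $a,b,c$; a case check using $a^p+b^p+c^p=0$ shows $v_{\mathfrak{q}}(a^{2p}+a^pb^p+b^{2p})=0$, so $\mathfrak{q}\nmid c_4$ and $E$ has multiplicative reduction at $\mathfrak{q}$. The determinant statement is immediate from the Weil pairing, which identifies $\det\overline{\rho}_{E,p}$ with the mod $p$ cyclotomic character $\chi_p$.

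For the Serre conductor $\mathcal{N}$, I would first argue that $\mathcal{N}$ is supported on $S$. Let $\mathfrak{q}\nmid 2p$. At such $\mathfrak{q}$ the reduction is either good (conductor exponent $0$) or multiplicative, and in the latter case Tate's analysis of $E[p]$ shows that the conductor exponent of $\overline{\rho}_{E,p}$ at $\mathfrak{q}$ is $0$ whenever $p\mid v_{\mathfrak{q}}(\Delta_{\min})$. In our setting, $\mathfrak{q}\nmid 2$ and $v_{\mathfrak{q}}(c_4)=0$ imply the given Weierstrass model is minimal, so $v_{\mathfrak{q}}(\Delta_{\min})=2p\,v_{\mathfrak{q}}(abc)$ is divisible by $p$, and the exponent is $0$. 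Hence $\mathcal{N}$ is supported on $S$. For finiteness, I would observe that the conductor exponent of a two-dimensional mod $p$ representation at a prime of residue characteristic $2$ is bounded purely in terms of $K_{\mathfrak{a}}$ (standard bounds coming from the filtration of wild inertia); combined with the fact that $S$ is a fixed finite set, $\mathcal{N}$ ranges over a finite, $K$-dependent set.

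For finite flatness at $\mathfrak{p}\mid p$, the key point is that $p\geq 17$ forces $\mathfrak{p}\notin S$. If $\mathfrak{p}\nmid abc$, then $E$ has good reduction at $\mathfrak{p}$ and $E[p]$ prolongs to the finite flat group scheme $\mathcal{E}[p]$ on the N\'eron model. If $\mathfrak{p}\mid abc$, $E$ has multiplicative reduction at $\mathfrak{p}$, and again the model is minimal with $v_{\mathfrak{p}}(\Delta_{\min}) = 2p\,v_{\mathfrak{p}}(abc)$ divisible by $p$; the Tate uniformisation over $K_{\mathfrak{p}}$ realises $E$ as $\mathbb{G}_m/q^{\mathbb{Z}}$ with $v_{\mathfrak{p}}(q)\in p\mathbb{Z}$, which implies that $E[p]$ extends to a finite flat group scheme over $\mathcal{O}_{K_{\mathfrak{p}}}$ (after possibly passing through $\mu_p\times\mathbb{Z}/p$ up to an unramified quadratic twist). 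The main technical obstacle is the verification of these two Tate-curve facts (conductor-exponent $0$ when $p\mid v_{\mathfrak{q}}(\Delta_{\min})$ at $\mathfrak{q}\nmid p$, and finite flatness when $p\mid v_{\mathfrak{p}}(q)$ at $\mathfrak{p}\mid p$), which I would cite from the standard references rather than re-prove.
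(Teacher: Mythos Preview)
Your proposal is correct and follows essentially the same approach as the paper: compute $c_4$ and $\Delta$, check that they are coprime away from the primes above $2$, use $p\mid v_{\mathfrak q}(\Delta)$ together with the standard Serre/Tate results to get unramifiedness at $\mathfrak q\nmid p$ and finite flatness at $\mathfrak q\mid p$, and invoke the Weil pairing for the determinant. The only minor difference is in the finiteness of the set of possible $\mathcal N$: the paper notes that $\mathcal N$ divides the conductor $N_E$ of $E$ and applies the explicit bound $v_{\mathfrak q}(N_E)\le 2+6v_{\mathfrak q}(2)$ from \cite{silver2}*{Theorem~IV.10.4}, whereas you appeal to abstract conductor bounds for two-dimensional mod~$p$ representations---both are valid, but the paper's route yields the explicit exponent bound that is used immediately afterwards in Table~\ref{tablecond}.
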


\begin{proof}
	Let $c_4$ and $\Delta$ denote the usual invariants of the model $E$ given in \eqref{freycurve}. These are given by the formulae
	$$c_4= 2^{4}(b^{2p}-a^pc^p) \text{ and } \Delta = 2^4 (abc)^{2p}.$$
	If a prime ideal $\mathfrak q$ of $\mathcal O_K$ divides both $c_4$ and $\Delta$, the equations above together with $a^p + b^p + c^p =0$ imply that $\mathfrak q$ divides $2$, so $\mathfrak q \in S$. 
	
	Let $\mathfrak q \not \in S$ be a prime of $\mathcal O_K$.
	By well-known results \cite{silver1}*{Section VII}, one deduces that the model \eqref{freycurve} for $E$ is minimal and the curve is semistable at $\mathfrak q$.
	
	Moreover, $p \mid v_{\mathfrak q}(\Delta)$. It follows from \cite{Serre} that $\overline{\rho}$ is finite flat at $\mathfrak q$ if $\mathfrak q$ lies above $p$. From the results in the same article, we can deduce that $\overline{\rho}$ is unramified at $\mathfrak q$ if $\mathfrak q \nmid p$. Since the Serre conductor $\mathcal N$ is by definition not supported on primes above $p$, we get that it is actually supported only on the primes in $S$. We also know that $\mathcal N$ divides the conductor of $E$, therefore we can bound the exponent of $\mathfrak q $ in $\mathcal N$ using \cite{silver2}*{Theorem IV.10.4}. We get
	$$ v_{\mathfrak q}(\mathcal N) \leq 2 + 3v_{\mathfrak q}(3)+ 6 v_{\mathfrak q}(2)=2+6v_{\mathfrak q}(2),$$
	for all $\mathfrak q \in S$.
	
The statement concerning the determinant is a well known consequence of the theory of the Weil pairing.
	
%%	It only remains to prove the statement concerning the determinant. This follows from the theory of the Weil pairing as we now describe.
%%	The Weil pairing can be viewed as an isomorphism
%%	$$e_p:\bigwedge^2 E[p] \xrightarrow{\cong} \mu_p,$$
%%	where $\mu_p$ is the group of $p$-th roots of unity sending $P \wedge Q$ to $e_p(P,Q)$. This isomorphism is Galois invariant, i.e. for $\sigma \in G_K$, we have $e_p(P^{\sigma}, Q^{\sigma})=e_p(P, Q)^{\sigma}$. $G_K$ acts on $\bigwedge^2 E[p]$ in the obvious way and therefore the Weil pairing $e_p$ is actually an isomorphism of Galois modules.
%	
%	Choose $P,Q$ be a basis for $E[p]$, so that $E[p] \cong \GL2(\mathbb F_p)$. For $\sigma \in G_K$, we write $\overline{\rho}(\sigma)= \left( \begin{array}{cc} a & b \\ c & d  \end{array} \right)$, an element of  $\GL2(\mathbb F_p)$. Then $\sigma(P \wedge Q)=(aP + cQ) \wedge (b P + d Q)= (ad-bc) \cdot P \wedge Q= \det(\overline{\rho}(\sigma)) \cdot P \wedge Q.$
%	
%	Thus $\bigwedge^2 E[p]$ gives the one-dimensional representation $\det(\overline{\rho})$. Since $\bigwedge^2 E[p] \cong \mu_p$ as Galois modules, it follows that $\det(\overline{\rho})$ is the cyclotomic character.
	
	\end{proof}

We denote by $N_E$ the conductor of $E$ and apply Lemma 4.4 in  \cite{FreitasSiksek} to this elliptic curve. Despite the fact that the results of \cite{FreitasSiksek} are stated for real quadratic fields, the arguments of this specific lemma are purely local and they continue to apply in our setting. We describe this process in details for $K=\mathbb Q(i)$ remarking that the computation is analogous for the other two number fields. Following the notation in \cite{FreitasSiksek},  one can compute values of $\lambda_i \in \mathcal O_K \setminus \mathfrak b$, which are some cokernel representatives of the map $\Phi:\mathcal O_K^{*} \to \mathcal (O_K/ \mathfrak b)^{*}/ \left((\mathcal O_K / \mathfrak b)^{*}\right)^2$. 

When $K=\mathbb Q(i)$, let $2 \mathcal O_K =\mathfrak a^2$. In the notation of \cite{FreitasSiksek}, this gives $\mathfrak b= \mathfrak a^5$, an ideal of norm $32$. The image of $\Phi:\mathcal O_K^{*} \to \mathcal (O_K/ \mathfrak b)^{*}/ \left((\mathcal O_K / \mathfrak b)^{*}\right)^2$ has order $2$ and the codomain $(O_K/ \mathfrak b)^{*}/ \left((\mathcal O_K / \mathfrak b)^{*}\right)^2$ is isomorphic to $\left(\mathbb Z/ 2 \mathbb Z \right)^3$. One can take as representatives of the cokernel of $\Phi$ the elements $\lambda_1 = 1$, $\lambda_2 =2+i$, $\lambda_3 = -3$ and $\lambda_4=-2+i$ in $\mathcal O_K$. This gives $\max\limits_{1 \leq i \leq 4} v_{\mathfrak a}(\Delta(K_{\mathfrak a}(\sqrt{\lambda_i})/K_{\mathfrak a}))=4$. Even though one could do all the above computations using pen and paper only, the author made use of basic features of Magma at some steps.

 After performing analogous computations in $\mathbb Q(\sqrt{-2})$ and $\mathbb Q(\sqrt{-7})$, one can see that a consequence of Lemma 4.4 in loc. cit. is the following. We can scale the triple $(a,b,c)$ by a unit in $\mathcal O_K^{*}$ such that we find ourselves in one of the cases listed in Table \ref{tablecond}.
 
\begin{table}[h]
	\centering
	\caption{}\label{tablecond}
	\begin{tabular}{|c|c|c|}
		\hline
		\textbf{Number field $K$}         & \textbf{Factorization of $2\mathcal O_K$} & \textbf{Valuation of $N_E$} \\ \hline
		$\mathbb Q(i)$  & $2\mathcal O_K = \mathfrak a^2$        & $v_{\mathfrak a}(N_E) = 8$           \\ \hline
		$\mathbb Q(\sqrt{-2})$ &   $2\mathcal O_K = \mathfrak a^2$       &   $v_{\mathfrak a}(N_E) = 8$         \\ \hline
		\multirow{2}{*}{ $\mathbb Q(\sqrt{-7})$} &  \multirow{2}{*}{$2\mathcal O_K = \mathfrak a_1 \cdot \mathfrak a_2, \, \mathfrak a_1 \neq \mathfrak a_2$  }      &    $v_{\mathfrak a_1}(N_E)=4 \text{ and } v_{\mathfrak a_2}(N_E)=1$       \\ \cline{3-3} 
		&           &   $v_{\mathfrak a_1}(N_E)=1 \text{ and } v_{\mathfrak a_2}(N_E)=4$        \\ \hline
	\end{tabular}
\end{table}

\section{Local computations and irreducibility of $\overline{\rho}$}

For applying Conjecture \ref{serreconj} to the Galois representation $\overline{\rho}= \overline{\rho}_{E,p}$, we have to prove that it is absolutely irreducible. We first prove irreducibility and the stronger condition will follow. 

The conductor exponent of an elliptic curve is defined (see \cite{silver2}*{Chapter IV}) as the sum between a tame and a wild part. Similarly, the conductor exponent of a Galois representation can be written as the sum between a tame and a wild part. In the following lemma, we think of a mod $p$ character as a $1$ dimensional representation over $\mathbb F_p$ and we define the wild part of its conductor exponent accordingly.

\begin{lemma} \label{conval} Let $E$ be an elliptic curve of conductor $\mathcal N$ defined over a number field $K$ and $p$ a rational prime. Suppose $\overline{\rho}_{E,p}$ is reducible, that is,
$$\overline{\rho}_{E,p} \sim \left( \begin{array}{cc} \theta & \star \\ 0 & \theta' \end{array} \right) \text{ with } \theta, \theta': G_K \to \mathbb F_p^{*}  \text{ satisfying } \theta\theta' = \chi_p,$$
where $\chi_p$ is the mod $p$ cyclotomic character. Let $\mathfrak q \nmid p$ be a prime in $K$ of additive reduction for $E$. Then $\delta_{\mathfrak q}(\mathcal N)$ is even and 
$$\delta_{\mathfrak q}(\theta) = \delta_{\mathfrak q}(\theta') = \frac{\delta_{\mathfrak q}(\mathcal N)}{2},$$
where $\delta_{\mathfrak q}(\theta)$, $\delta_{\mathfrak q}(\theta')$ and $\delta_{\mathfrak q}(N)$ are the wild parts of the exponent at $\mathfrak q$ in the conductors of $\theta$, $\theta'$ and $E$.
\end{lemma}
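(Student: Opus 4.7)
The argument is purely local at $\mathfrak{q}$, and the strategy is to compute the wild part of the conductor exponent of $\overline{\rho}_{E,p}|_{G_{K_{\mathfrak{q}}}}$ in two different ways and equate them. First, since $\mathfrak{q}\nmid p$, the residue characteristic $\ell$ of $\mathfrak{q}$ is distinct from $p$, so the wild inertia subgroup $P_{\mathfrak{q}}\subset I_{\mathfrak{q}}$ is pro-$\ell$ and its image in $\GL2(\mathbb F_p)$ is a finite $\ell$-group. The same image arises from the action of $P_{\mathfrak{q}}$ on $T_pE$, so the wild conductor of $\overline{\rho}_{E,p}|_{G_{K_{\mathfrak{q}}}}$ equals that of $T_pE|_{G_{K_{\mathfrak{q}}}}$, which is $\delta_{\mathfrak{q}}(\mathcal N)$ by definition of the conductor of $E$.

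Next, the assumed reducibility gives a short exact sequence $0\to V_{\theta}\to \overline{\rho}_{E,p}\to V_{\theta'}\to 0$ of $G_{K_{\mathfrak{q}}}$-modules, and the Swan (equivalently, wild) conductor is additive on short exact sequences, so
\[
\delta_{\mathfrak{q}}(\mathcal N)=\delta_{\mathfrak{q}}(\theta)+\delta_{\mathfrak{q}}(\theta').
\]
It remains to prove the two summands are equal. For this I would use the determinant relation: $\theta\theta'=\chi_p$ is unramified at $\mathfrak{q}$ because $\mathfrak{q}\nmid p$, so $\delta_{\mathfrak{q}}(\chi_p)=0$. The non-cancellation principle for abelian characters asserts that if two characters have distinct largest upper-numbering breaks, then their product has largest break equal to the larger of the two; since $\delta_{\mathfrak{q}}(\theta\theta')=0$, this forces $\delta_{\mathfrak{q}}(\theta)=\delta_{\mathfrak{q}}(\theta')$. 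Combined with the displayed identity this yields $\delta_{\mathfrak{q}}(\theta)=\delta_{\mathfrak{q}}(\theta')=\delta_{\mathfrak{q}}(\mathcal N)/2$, and in particular $\delta_{\mathfrak{q}}(\mathcal N)$ is even.

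The only step beyond routine bookkeeping is the non-cancellation principle, which follows from the description of the Swan conductor of an abelian character as its largest break in the upper-numbering ramification filtration (see Serre, \emph{Local Fields}, Ch.~VI). I anticipate no further difficulty: additivity of the Swan conductor on short exact sequences and the compatibility of wild conductors under reduction from $T_pE$ to $E[p]$ at primes of residue characteristic different from $p$ are both classical.
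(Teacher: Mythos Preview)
Your proof is correct and follows the same overall strategy as the paper: split the wild conductor of $\overline{\rho}_{E,p}$ as a sum $\delta_{\mathfrak q}(\theta)+\delta_{\mathfrak q}(\theta')$, then use the determinant relation $\theta\theta'=\chi_p$ (unramified at $\mathfrak q$) to show the two summands coincide. The differences are in packaging. For the decomposition, the paper works by hand with the explicit formula $\sum_{i\ge 1}\frac{g_i}{g_0}\dim\bigl(E[p]/E[p]^{G_i}\bigr)$, observing that on wild inertia the upper-triangular $\star$ vanishes (the image is an $\ell$-group with $\ell\neq p$), so $E[p]$ splits as $V_1\oplus V_2$ and the sum breaks into two equal pieces; you invoke additivity of the Swan conductor on short exact sequences, which is cleaner and more conceptual. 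For the equality $\delta_{\mathfrak q}(\theta)=\delta_{\mathfrak q}(\theta')$, the paper simply notes that $\theta'|_{I_{\mathfrak q}}=\theta^{-1}|_{I_{\mathfrak q}}$, and a character and its inverse have identical restrictions to every ramification subgroup, hence identical conductors; your non-cancellation argument via upper-numbering breaks reaches the same conclusion but is a bit more roundabout---you could shorten your write-up by using the inverse observation instead.
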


\begin{proof}
	The first equality follows from the fact that $\theta \theta' = \chi_{p}$, therefore when restricted to the absolute inertia group $I_{\mathfrak q}$, the characters are inverses of each other.  
	
	In what follows, we are going to think of $\theta$ and $\theta'$ as one dimensional representations from $G_K$ targeting the one dimensional subspaces of $E[p]$ on which we see their actions.

Denote by  $K_{\mathfrak q}$ the completion of the number field $K$ at the prime $\mathfrak q$ and by $L=K_{\mathfrak q}(E[p])$ the extension of $K_{\mathfrak q}$ that we get by adjoining the coordinates of the $p$-torsion of $E$.
	
	As one can see in \cite{silver2}*{chapter IV}, the wild part of the exponent of $\mathcal N$ at $\mathfrak q$ can be computed as
	
	$$\sum_{i\geq 1} \frac{g_i(L/K_{\mathfrak q})}{g_0(L/K_{\mathfrak q}) } \cdot \dim_{\mathbb F_p} \left( E[p]/ E[p]^{G_i(L/K_{\mathfrak q})}  \right),$$
	where $G_0(L/K)$ is the inertia group and $G_i(L/K_{\mathfrak q})$ the $i$-th ramification group of of $L/K_{\mathfrak q}$. The quantities $g_0(L/K_{\mathfrak q})$ and $g_i(L/K_{\mathfrak q})$ are the orders of the aforementioned groups. The $i$-th inertia subgroup of $L/K_{\mathfrak q}$ can be identified with the $i$-th inertia subgroup of $\mathfrak q$ in the $G_K$ for all $i \geq 0$.

	It is known that $G_1(L/K_{\mathfrak q})$ is a Sylow group of order coprime to $p$, which implies that when restricted to the wild inertia $G_1(L/K_{\mathfrak q})$, the representation $\overline \rho$ becomes
	$$\overline \rho|_{G_1(L/K_{\mathfrak q})} \sim \left( \begin{array}{cc} \theta & 0 \\ 0 & \theta' \end{array} \right).$$
	In fact, since $\mathfrak q \nmid p$, the cyclotomic character $\chi_p = \theta \theta'$ is trivial on $G_0(L/K_{\mathfrak q})$, we see that
	$$ \overline \rho|_{G_1(L/K_{\mathfrak q})} \sim \left( \begin{array}{cc} \theta & 0 \\ 0 & \theta^{-1} \end{array} \right).$$
	
	When restricted to $G_1(L/K_{\mathfrak q})$, let $V_1$ be the subspace of $E[p]$ on which $\theta$ acts and $V_2$ the subpace of $E[p]$ on which we see the action of $\theta'$. One can see that $E[p] / E[p]^{G_i(L/K_{\mathfrak q})} \equiv V_1/V_1^{G_i(L/K_{\mathfrak q}) } \oplus V_2/V_2^{G_i(L/K_{\mathfrak q})} $
	and $\dim_{\mathbb F_p} V_1/ V_1^{G_i(L/K_{\mathfrak q})} = \dim_{\mathbb F_p} V_2/V_2^{G_i(L/K_{\mathfrak q})}$ for all $i \geq 1$.
	
	If we think of $\theta$ as a 1 dimensional representation from $\Gal(K(E[p])/K) \to V_1$, then the definition of the wild part of its conductor at $\mathfrak q$ is
	$$\delta_{\mathfrak q}(\theta)= \sum_{i \geq 1} \frac{}{} \dim_{\mathbb F_p} \left( V_1/V_1^{G_i(F/K)} \right),$$
	where $V_1$ is the subspace of $E[p]$ on which $\overline \rho|_{G_1(F/K)}$ acts as $\theta$. Using the analogous formula for $\delta_{\mathfrak q}(\theta')$, the conclusion follows.
	
\end{proof}

\begin{theorem}
Let $p \geq 19$ if $K \in \{\mathbb Q(i) , \mathbb Q(\sqrt{-2}) \}$ and $p \geq 17$ if $K = \mathbb Q(\sqrt{-7})$. Then $\overline{\rho}$ is irreducible.	
	\end{theorem}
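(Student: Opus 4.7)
The plan is to assume that $\overline{\rho}$ is reducible, write its semisimplification as $\theta\oplus\theta'$ with characters $\theta,\theta'\colon G_K\to\mathbb{F}_p^{*}$ satisfying $\theta\theta'=\chi_p$ (using Lemma \ref{semistab}), and derive a contradiction with $p\geq 17$. Equivalently, $E$ admits a $G_K$-stable subgroup $C\subset E[p]$ of order $p$ on which Galois acts through $\theta$. The strategy has three parts: (a) bound the conductor of $\theta$ using the local information provided by the Frey curve; (b) deduce via class field theory that $\theta$ has explicit bounded order $N$; (c) pair this with the existing classification of prime torsion of elliptic curves over number fields of small degree to conclude that $p$ is smaller than the hypothesised lower bound.

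For part (a), Lemma \ref{semistab} already shows $\theta$ is unramified outside $S$ and the primes above $p$. At each $\mathfrak{p}\mid p$, finite-flatness of $\overline{\rho}$ forces $\theta|_{I_{\mathfrak{p}}}\in\{1,\chi_p|_{I_{\mathfrak{p}}}\}$. At each $\mathfrak{a}\in S$ of additive reduction (the case for $K\in\{\mathbb{Q}(i),\mathbb{Q}(\sqrt{-2})\}$, and for $\mathfrak{a}_1$ when $K=\mathbb{Q}(\sqrt{-7})$), Lemma \ref{conval} combined with Table \ref{tablecond} pins the wild part of the conductor of $\theta$ at $\mathfrak{a}$ to exactly $\delta_{\mathfrak{a}}(N_E)/2$, giving an explicit upper bound on $v_{\mathfrak{a}}(\mathfrak{f}(\theta))$; at the remaining multiplicative prime $\mathfrak{a}_2$ in $\mathbb{Q}(\sqrt{-7})$, Tate's uniformisation shows $\theta|_{I_{\mathfrak{a}_2}}$ is either trivial or cyclotomic. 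Altogether, $\theta$ factors through the ray class group $\mathrm{Cl}_{\mathfrak{m}}(K)$ for an explicit modulus $\mathfrak{m}$ drawn from a short finite list depending only on $K$.

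For part (b), we clear the $p$-part of the modulus: since $\theta|_{I_{\mathfrak{p}}}$ is trivial or cyclotomic, a suitable power $\theta^n$ with $n$ depending only on $K$ is unramified at every $\mathfrak{p}\mid p$ and hence factors through a ray class group $\mathrm{Cl}_{\mathfrak{m}'}(K)$ whose modulus $\mathfrak{m}'$ is supported only on $S$. Because $K$ has class number one, this group is computed from the exact sequence
\[
\mathcal{O}_K^{*}\longrightarrow(\mathcal{O}_K/\mathfrak{m}')^{*}\longrightarrow\mathrm{Cl}_{\mathfrak{m}'}(K)\longrightarrow 1,
\]
and is directly amenable to calculation in Magma. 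One reads off an explicit $N$ (depending only on $K$) with $\theta^N=1$; then the fixed field $L$ of $\theta$ satisfies $[L:K]\mid N$, the subgroup $C$ lies in $E(L)$, and $E$ acquires an $L$-rational point of order $p$. Part (c) now follows from the classification of possible primes $p$ occurring as orders of torsion points on elliptic curves over number fields of degree at most $2N$ over $\mathbb{Q}$ (Kamienny--Mazur and its extensions by Parent and others); provided $N$ is small enough, no element of this finite list is $\geq 17$ (respectively $\geq 19$), and we are done.

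The main obstacle is part (b). In the approaches of \cite{FreitasSiksek} and \cite{KrausQ}, the presence of a fundamental unit of infinite order in $\mathcal{O}_K^{*}$ cuts the image of $\mathcal{O}_K^{*}$ in $(\mathcal{O}_K/\mathfrak{m}')^{*}$ drastically and one gets a small $N$ essentially for free, whereas $\mathcal{O}_K^{*}$ is $\{\pm 1\}$ or $\{\pm 1,\pm i\}$ in our setting and offers no such leverage. Keeping $N$ small enough for the torsion classification to finish the job therefore requires both a careful choice of the auxiliary exponent $n$ (so that $\theta^n$ sees as few obstructions at $p$ as possible) and direct ray class group computations in each of the three fields individually.
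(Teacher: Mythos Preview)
Your outline matches the paper's argument in case (i), i.e.\ when one of $\theta,\theta'$ is unramified at every prime above $p$; there the ray class group computation and the torsion bounds of Kamienny and Derickx--Kamienny--Stein--Stoll finish exactly as you describe. The gap is in part (b). You claim that because $\theta|_{I_{\mathfrak p}}\in\{1,\chi_p|_{I_{\mathfrak p}}\}$, some power $\theta^n$ with $n$ depending only on $K$ becomes unramified at all $\mathfrak p\mid p$. This is false: since $p$ is unramified in $K$, the character $\chi_p|_{I_{\mathfrak p}}$ has order $p-1$, so if $\theta|_{I_{\mathfrak p}}=\chi_p|_{I_{\mathfrak p}}$ then $\theta^n|_{I_{\mathfrak p}}=1$ forces $(p-1)\mid n$, and no bound on $n$ independent of $p$ is available. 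Consequently you get no uniform bound on the order of $\theta$, hence no bound on $[L:K]$, and the torsion classification cannot be invoked.

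The problematic configuration is exactly when $p$ splits as $\mathfrak p_1\mathfrak p_2$ and $\theta$ is cyclotomic at $\mathfrak p_1$ but trivial at $\mathfrak p_2$ (so $\theta'$ is the reverse); then neither character is unramified above $p$ and your reduction to a ray class group of modulus supported on $S$ never gets off the ground. The paper does \emph{not} try to force $\theta$ into such a ray class group in this case. Instead it observes that $\theta^2$ is unramified away from the single prime $\mathfrak p_1$ (potential multiplicativity at $S$ kills the ramification there after squaring) and applies a class field theory identity in the style of David and Kraus (Lemma \ref{classfieldlemma}) to evaluate $\theta^2(\sigma_{\mathfrak a})$ for $\mathfrak a\in S$ as a norm from $K_{\mathfrak p_1}$; comparing with the value forced by the isogeny yields $p\mid 3$. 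This step is genuinely different from the ray-class-group-plus-torsion strategy and is what your proposal is missing.
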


\begin{proof}
	If $\overline {\rho}_{E,p}$ is reducible, then we can write
	$$\overline{\rho}_{E,p} \sim \left( \begin{array}{cc} \theta & * \\ 0 & \theta' \end{array}  \right),$$
	where $\theta$ and $\theta'$ are characters $G_K \to \mathbb F_p^{*}$, and $\theta \theta' = \chi_{p}$, the mod $p$ cyclotomic character given by the action of $G_K$ on the group $\mu_p$ of $p^{\text{th}}$ roots of unity. 
	
%	If $p = 17$ and $K = \mathbb Q(\sqrt{-7})$, then $E$ would be an elliptic curve with a $17$-isogeny and a $2$-isogeny, both defined over $K$. The previous data gives rise to a non-cuspidal $K$ rational point on the modular curve $X_0(34)$.  But the quotient by the Atkin-Lehner involution $X_0(34)/ \langle w_2 \rangle$ is the elliptic curve with LMFDB \cite{lmfdb} label 2.0.7.1-289.1-a3. This is the base change of the curve 17.a2 defined over $\mathbb Q$. Using Magma, we detected that $X_0(34)/ \langle w_2 \rangle$ is an elliptic curve of rank $0$ over $K$ and all the torsion points of this curve pull back to $4$ cuspidal points on $X_0(34)(K)$. Therefore, $X_0(34)$ does not have any non-cuspidal points defined over $K$.
	
	From the proof of Lemma 1 in \cite{KrausQ} we know that $\theta, \theta'$ are unramified away from $p$ and the primes of additive reduction of $E$, i.e. the primes above $2$ in $\mathcal O_K$. Using the notation introduced in Lemma \ref{conval},  we have that
	$$\delta_{\mathfrak a}(\mathcal N_{\theta}) = \delta_{\mathfrak a}(\mathcal N_{\theta'}) = \frac{1}{2} \delta_{\mathfrak a}(N_E),$$
	where $\mathfrak a$ is an ideal of $\mathcal O_K$ above $2$.
	
The tame part of the exponent of a prime in the conductor of a character is at most $1$ and must be equal $1$ if the wild part is non-zero. On the other hand, the corresponding quantity in the conductor of an elliptic curve is at most $2$ and must be equal to $2$ if its wild part is non-zero. For the precise definitions, see  \cite{silver2}*{chapter IV}.
	
(i) Suppose $p$ is coprime to $\mathcal N_{\theta}$ or $\mathcal N_{\theta'}$. Since the conductor of an elliptic curve is isogeny invariant, by eventually replacing $E$ with the $p$-isogenous curve $E/\langle \theta \rangle$ we can assume that $p$ is coprime to $N_{\theta}$ and hence deduce that $\theta$ is unramified away from the primes in $S$. 

For $K=\mathbb Q(\sqrt{-7})$, denote by $\mathfrak a_1, \mathfrak a_2$ the prime ideals of $K$ above $2$. From the table above, we see that
$$N_{\theta} \in \{ \mathfrak a_{1}^2, \mathfrak a_2^2 \}.$$
Therefore, $\theta$ is a character of the ray class group of modulus $\mathfrak a_1^2$ or $\mathfrak a_2^2$. Both of these ray class groups are trivial, which implies that $\theta$ has order $1$ and hence that $E$ has a point of order $p$ defined over $K$. Our assumption $p \geq 17$ contradicts the results of Theorem 3.1 in \cite{Kamienny1992} which implies the order of such a torsion point is less than or equal to $13$.

We threat the cases $K=\mathbb Q(i), \mathbb Q(\sqrt{-2})$ together. Here we assumed that $p \geq 19$. Using the values computed in the previous table and Lemma \ref{conval}, we see that the only possibility for the conductor of $\theta$ is
$$N_{\theta} = \mathfrak a^4,$$
where $\mathfrak a$ is the unique prime above $2$ in $\mathcal O_K$. The ray class groups for these fields and modulus are $\mathbb Z/ 2 \mathbb Z$ and $\mathbb Z/4 \mathbb Z$ respectively. These computations were done using Magma.

In turn, for $\mathbb Q(i)$ and $\mathbb Q(\sqrt{-2})$,  $\theta$ is a character of that corresponding ray class group, therefore the order of $\theta$ divides the exponent of the group. If $\theta$ has order $1$ then $E$ has a point of order $p$ over $K$ and we get a contradiction exactly as before. Similarly, if the order of $\theta$ is $2$ then $E$ has a $p$-torsion point defined over a quadratic extension $L$ of $K$. But $[L:\mathbb Q] = [L:K] \cdot [K:\mathbb Q]=4$ and the possible prime torsion of elliptic curves over quartic fields determined by Derickx, Kamienny Stein and Stoll \cite{Derkamstoll} would imply that $p \leq 17$, a contradiction.

If $\theta$ has order $4$, then let $L$ be the quadratic extension of $K$ that is cut by the character $\theta^2$. The restriction $\phi=\theta|_{G_L}$ is a quadratic character of $G_L$ and therefore the twist by $\phi$ of $E$, regarded as an elliptic curve over $L$, is an elliptic curve with a $p$-torsion point defined over $L$. The field $L$ is of total degree $4$ over $\mathbb Q$ and hence we get a contradiction as in the other case.

(ii) Suppose now that $p$ is not coprime with $\mathcal N_{\theta}$ nor with $\mathcal N_{\theta'}$. Since $p \geq 19$, we know that $p$ is not ramified in $K$. If we suppose that $p$ is inert in $K$, then from \cite{KrausQ}*{Lemme 1} it follows that $p$ divides only one of the conductors, a contradiction with the hypothesis.

The only case that has to be considered now is when $p$ splits in $K$. Let $\mathfrak p_1, \mathfrak p_2$ be the two ideals of $\mathcal O_K$ such that $p \mathcal O_K = \mathfrak p_1 \mathfrak p_2$. We can suppose that $\mathfrak p_1 | \mathcal N_{\theta}$, $\mathfrak p_1 \nmid \mathcal N_{\theta'}$ and $\mathfrak p_2 | \mathcal N_{\theta'}$, $\mathfrak p_2 \nmid \mathcal N_{\theta}$. The primes $\mathfrak p_1$, $\mathfrak p_2$ are unramified so it follows from \cite{serre72}*{Proposition 12} that $E$ has good ordinary or multiplicative reduction at these primes and $\theta|_{I_{\mathfrak p_1}} = \chi_p|_{I_{\mathfrak p_1}}$ and $\theta'|_{I_{\mathfrak p_2}} = \chi_p|_{I_{\mathfrak p_2}}$. 

The character $\theta^2$ is unramified everywhere except $\mathfrak p_1$, because all the bad places for $E$ are of potentially multiplicative reduction. We also know that $\theta^2|_{ I_{\mathfrak p_1}} = \chi^2_p|_{I_{\mathfrak p_1}}$. 

From Lemma \ref{classfieldlemma} below, it follows that
  $$\theta^2(\sigma_{\mathfrak a}) \equiv N_{K_{\mathfrak p_1}/\mathbb Q_p}\left( \iota_{\mathfrak p_1} (\mathfrak a) \right)^2 \pmod{p},$$
 where $\mathfrak a$ is a prime ideal of $\mathcal O_K$ that lies above $2$ (principal, of course) and $\sigma_{\mathfrak a}$ is the Frobenius element at $\mathfrak a$.
 
 As explained in \cite{ASiksek}*{Lemma 6.3}, by eventually replacing $E$ with the $p$-isogenous curve $E/ \langle \theta \rangle$ we can assume that $\theta^2(\sigma_{\mathfrak a}) \equiv 1 \pmod{p}$. We have that $$N_{K_{\mathfrak p_1}/\mathbb Q_p}\left( \iota_{\mathfrak p_1} (\mathfrak a) \right)^2 -1 = 3,$$ so $p \mid 3$ which gives a contradiction.

\end{proof}

We have used above the following class field theory lemma. We do not include a proof here since that would be {\it mutatis mutandis}, just a specialisation of \cite{David}*{Proposition 2.4}.

\begin{lemma} \label{classfieldlemma}
Let $(p)= \mathfrak p_1 \mathfrak p_2$ be a rational prime that splits in $K$ and let $\theta:G_K \to \mathbb F_p^{*}$, be a character with the property that $\theta^2$ is unramified everywhere except $\mathfrak p_1$.
Then, for any prime to $p$ element $0 \neq \alpha \in K$, the following congruence relation holds
$$ \prod_{\mathfrak q \nmid p} \theta^2(\sigma_{\mathfrak q})^{v_{\mathfrak q}(\alpha)} \equiv N_{K_{\mathfrak p_1}/ \mathbb Q_p} \left( \iota_{\mathfrak p_1} ( \alpha) \right)^2 \pmod{p}.$$
\end{lemma}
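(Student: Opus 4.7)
The strategy is to reinterpret $\theta^2$ as an idele class character via global class field theory and then to apply the product formula to the principal idele associated with $\alpha$.

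First I would invoke global class field theory to view $\theta^2$ as a continuous character $\mathbb{A}_K^\times/K^\times \to \mathbb{F}_p^\times$. The hypothesis that $\theta^2$ is unramified at every finite prime $\mathfrak{q} \neq \mathfrak{p}_1$ translates, via local class field theory, into the vanishing of $\theta^2 \circ \operatorname{rec}_\mathfrak{q}$ on $\mathcal{O}_{K_\mathfrak{q}}^\times$. At the unique (complex) archimedean place of $K$, a continuous character $\mathbb{C}^\times \to \mathbb{F}_p^\times$ from a connected group to a discrete group is automatically trivial. At $\mathfrak{p}_1$, the relation $\theta|_{I_{\mathfrak{p}_1}} = \chi_p|_{I_{\mathfrak{p}_1}}$ established in the proof of the preceding theorem gives $\theta^2 = \chi_p^2$ on $\mathcal{O}_{K_{\mathfrak{p}_1}}^\times$ after transporting through local reciprocity, and the standard description of the mod-$p$ cyclotomic character on units identifies the latter character with the norm $N_{K_{\mathfrak{p}_1}/\mathbb{Q}_p}$ followed by reduction modulo $p$.

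Next, for $\alpha \in K^\times$ coprime to $p$, the diagonal embedding $\widetilde{\alpha} \in \mathbb{A}_K^\times$ is principal, so its image in the idele class group is trivial. Applying the character yields
\[
1 \,\equiv\, \prod_v \theta^2\bigl(\operatorname{rec}_v(\alpha)\bigr) \pmod{p},
\]
where the product runs over all places $v$ of $K$. By the points above, the archimedean factor is trivial; the $\mathfrak{p}_2$-factor is trivial, since $v_{\mathfrak{p}_2}(\alpha)=0$ (as $\alpha$ is coprime to $p$) and $\theta^2$ is unramified at $\mathfrak{p}_2$; each factor at a finite $\mathfrak{q} \nmid p$ contributes $\theta^2(\sigma_\mathfrak{q})^{v_\mathfrak{q}(\alpha)}$; and the $\mathfrak{p}_1$-factor contributes $N_{K_{\mathfrak{p}_1}/\mathbb{Q}_p}(\iota_{\mathfrak{p}_1}(\alpha))^{-2} \pmod{p}$. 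Solving for the finite product reproduces the claimed congruence.

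The main obstacle is the careful bookkeeping of normalizations: the choice between the arithmetic and geometric Frobenius in the Artin reciprocity map dictates the sign of the exponent on the cyclotomic side, and this convention must be aligned with the definition of $\sigma_\mathfrak{q}$ used in the statement so that the exponent on the norm comes out as $+2$ rather than $-2$. Once the conventions of \cite{David}*{Proposition 2.4} are fixed throughout, everything else is a routine translation between Galois-theoretic and idele-theoretic language using only the standard local and global reciprocity laws, which is why the author refers the reader there.
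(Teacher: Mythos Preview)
Your approach is correct and is exactly the standard argument: pass to the idele class group via global reciprocity, evaluate the trivial product on the principal idele $\alpha$, and read off the local contributions. The paper does not actually supply a proof here---it simply cites \cite{David}*{Proposition 2.4}---so your sketch is in fact more explicit than what appears in the text, but it follows the same route that David takes.

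One point worth highlighting: you correctly observe that the stated hypotheses of the lemma (only that $\theta^{2}$ is unramified away from $\mathfrak p_{1}$) are not by themselves enough to identify the $\mathfrak p_{1}$-contribution as $N_{K_{\mathfrak p_{1}}/\mathbb Q_{p}}(\,\cdot\,)^{\pm 2}$; one needs the additional input $\theta|_{I_{\mathfrak p_{1}}}=\chi_{p}|_{I_{\mathfrak p_{1}}}$, which you import from the surrounding argument. This is the right diagnosis: as written, the lemma silently relies on that relation (which holds in the intended application and in David's setting), and without it the local character at $\mathfrak p_{1}$ could be any power of the reduction map. Your remark about the arithmetic-versus-geometric Frobenius convention controlling the sign of the exponent is also on point.
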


\begin{remark} For every place $v$, the map $\iota_{v}$ is the inclusion of $K$ into the completion $K_v$. Abusing notation we write $i_{\mathfrak q}$ and $K_{\mathfrak q}$ for the inclusion, respectively the completion of $K$, with respect to the corresponding prime ideal $\mathfrak q$.
	\end{remark}

We obtain the following corollary, which implies absolute irreducibility.

\begin{corollary} For $p \geq 19$, the Galois representation $\overline{\rho}$ is surjective.	
	\end{corollary}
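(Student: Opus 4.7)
The plan is to combine the irreducibility established in the preceding theorem with Dickson's classification of (the projective images of) subgroups of $\GL_2(\mathbb{F}_p)$. Write $\Gamma := \overline{\rho}(G_K)$ and let $\overline{\Gamma}$ denote its image in $\mathrm{PGL}_2(\mathbb{F}_p)$. Since $\det \overline{\rho} = \chi_p$ is the mod $p$ cyclotomic character, which is surjective onto $\mathbb{F}_p^*$ (as $p$ is unramified in $K$ and $p \geq 19$), it will suffice to show that $\overline{\Gamma} = \mathrm{PGL}_2(\mathbb{F}_p)$.

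The first step is to observe that $\Gamma$ contains an element of order $p$. This is exactly the content of the discussion following Lemma \ref{semistab}: $E$ has potentially multiplicative reduction at any $\mathfrak{a} \in S$, so the theory of the Tate curve forces $p \mid \#\overline{\rho}(I_{\mathfrak{a}})$. Any element of order $p$ in $\GL_2(\mathbb{F}_p)$ is conjugate to a non-trivial unipotent matrix, and its image in $\mathrm{PGL}_2(\mathbb{F}_p)$ still has order $p$. Hence $p \mid \#\overline{\Gamma}$.

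Next I invoke Dickson's classification. Together with the irreducibility of $\overline{\rho}$ proved in the previous theorem, the only possibilities for $\overline{\Gamma}$ are: (i) a subgroup of the normalizer of a (split or non-split) Cartan; (ii) one of the exceptional projective groups isomorphic to $A_4$, $S_4$, or $A_5$; or (iii) a subgroup containing $\mathrm{PSL}_2(\mathbb{F}_p)$. Option (i) is ruled out since the normalizers of Cartans have orders $2(p-1)$ or $2(p+1)$, both coprime to $p$. Option (ii) is ruled out because $\# A_5 = 60 < 19 \leq p$, so these groups have order prime to $p$. Only (iii) remains, so $\overline{\Gamma} \supseteq \mathrm{PSL}_2(\mathbb{F}_p)$.

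Finally, I would upgrade this to $\overline{\Gamma} = \mathrm{PGL}_2(\mathbb{F}_p)$ using the determinant. The preimage of $\mathrm{PSL}_2(\mathbb{F}_p)$ in $\GL_2(\mathbb{F}_p)$ consists of matrices with square determinant; since $\chi_p : G_K \to \mathbb{F}_p^*$ is surjective and $(\mathbb{F}_p^*)^2 \neq \mathbb{F}_p^*$ for $p \geq 3$, we cannot have $\overline{\Gamma} = \mathrm{PSL}_2(\mathbb{F}_p)$, so $\overline{\Gamma} = \mathrm{PGL}_2(\mathbb{F}_p)$. Combined with $\det(\Gamma) = \mathbb{F}_p^*$, this forces $\Gamma = \GL_2(\mathbb{F}_p)$. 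There is no serious obstacle; the corollary is an almost immediate consequence of irreducibility together with the unipotent inertia image, once one inputs the (standard) classification of subgroups of $\GL_2(\mathbb{F}_p)$.
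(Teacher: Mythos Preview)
Your proof is correct and follows essentially the same route as the paper. Both arguments use (i) the element of order $p$ in $\overline{\rho}(I_{\mathfrak a})$ coming from potentially multiplicative reduction at $\mathfrak a\in S$, (ii) Dickson's classification to force the image to contain $\SL_2(\mathbb F_p)$ once irreducibility is known, and (iii) surjectivity of $\chi_p$ (equivalently $K\cap\mathbb Q(\zeta_p)=\mathbb Q$) to conclude $\overline{\rho}(G_K)=\GL_2(\mathbb F_p)$; the only cosmetic difference is that you pass explicitly to the projective image and enumerate the Dickson cases, whereas the paper quotes the equivalent statement that every irreducible subgroup of $\GL_2(\mathbb F_p)$ containing an element of order $p$ already contains $\SL_2(\mathbb F_p)$.
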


\begin{proof} In the proof of \cite{AFreitasSiksek}*{Lemma 3.7}, the authors used that $v_{\mathfrak a}(j(E)) <0$ and $p \nmid v_{\mathfrak a}(j(E))$ to deduce that $E$ has multiplicative reduction at $\mathfrak a \in S$ and that the cardinality of $\overline{\rho}(I_{\mathfrak a})$ is also divisible by $p$, where $I_{\mathfrak a}$ is the inertia subgroup at $\mathfrak a$. Since all the  irreducible subgroups of $\GL2(\mathbb F_p)$ that contain an element of order $p$ have $\SL2(\mathbb F_p)$ as a subgroup, we get that $\SL2(\mathbb F_p) \subseteq \overline{\rho}(G_K)$. The determinant of $\overline\rho$ is the mod $p$ cyclotomic character $\chi_p$, and since $K \cap \mathbb Q(\zeta_p) = \mathbb Q$, the latter is surjective. All the above imply that $\overline{\rho}(G_K)= \GL2(\mathbb F_p)$.
	
\end{proof}

\section{Applying Serre's conjecture}

We are now making use of the Conjecture \ref{serreconj}. This predicts the existence of a mod $p$  eigenform $\Phi : \mathbb T_{\overline{\mathbb F}_p}(\mathcal N) \to \overline{\mathbb F}_p$ over $K$ such that for every prime ideal $(\pi) \subset \mathcal O_K$, coprime to $p \mathcal N$. 
$$Tr\left(\overline{\rho}(\Frob_{(\pi)}) \right) = \Phi(T_{\pi}),$$
where $T_{\pi}$ is a Hecke operator. 

The trace elements $Tr\left(\overline{\rho}(\Frob_{(\pi)}) \right)$ lie in $\mathbb F_p$, therefore $\Phi$ corresponds to a class in $H^1(Y_0(\mathcal N), \mathbb F_p)$ that is an eigenvector for all such Hecke operators $T_{\pi}$.  

As previously described, the obstruction in lifting such mod $p$ eigenforms to complex eigenforms is given by the presence of $p$-torsion in $H^2(Y_0(\mathcal N), \mathbb Z)$. It is known (see for example \cite{ash1986}*{page 202}) that if the last common multiple of the orders of elements of finite order in $\Gamma_0(\mathcal N)$ is invertible in the coefficients module, then simplicial cohomology and group cohomology are the same, in other words $H^2(Y_0(\mathcal N), \mathbb Z[\frac{1}{6}]) \cong H^2(\Gamma_0(\mathcal N), \mathbb Z[\frac 1 6])$.

Lefschetz duality for cohomology with compact support \cite{sentors}*{Section 2} gives a relation between the first homology and the second cohomology
$H_{1}(\Gamma_0(\mathcal N), \mathbb Z\left[\frac{1}{6}\right]) \cong H^2(\Gamma_0(\mathcal N) , \mathbb Z \left[\frac{1}{6}\right])$. It is also known that the abelianization $\Gamma_0(\mathcal N)^{ab} \cong H_{1}(\Gamma_0(\mathcal N), \mathbb Z)$ and therefore, for primes $p >3$, if the group $H^2(Y_0(\mathcal N) , \mathbb Z)$ has a $p$-torsion element, then $\Gamma_0(\mathcal N)^{ab}$ will have a $p$-torsion as well.

Using the Magma implementation, kindly provided to us by Haluk \c Seng\" un, of his algorithm in \cite{sentors} we computed the abelianizations $\Gamma_0(\mathcal N)^{ab}$. This algorithm uses as input generators and relations for $\GL2(\mathcal O_K)$ which were computed by Swan in \cite{swan71}. The reader can consult the Magma code at:
\begin{center}
 \url{https://warwick.ac.uk/fac/sci/maths/people/staff/turcas/fermatprog}.
\end{center}

We list the torsion found in Table \ref{primetorsion1}, where $\mathfrak a, \mathfrak a_1$ and $\mathfrak a_2$ are primes above $2$ in the corresponding number fields.
\begin{center}
\begin{table}[h]
	\centering
	\caption{prime torsion in $\Gamma_0(\mathcal N)^{ab}$}
	\label{primetorsion1}
	\begin{tabular}{|c|c|l|}
		\hline
		\textbf{Number field}         & \textbf{Level $\mathcal N$} & \textbf{primes $l$ such that $\Gamma_0(\mathcal N)^{ab}[l] \neq 0$ } \\ \hline
		$\mathbb Q(i)$&      $\mathfrak a^8$      &  2          \\ \hline
		$\mathbb Q(\sqrt{-2})$&  $\mathfrak a^8$         & 2           \\ \hline
		\multirow{2}{*}{$\mathbb Q(\sqrt{-7})$} & $\mathfrak a_1^4 \mathfrak a_2$          &  2,3          \\ \cline{2-3} 
		&   $\mathfrak a_1 \mathfrak a_2^4$        &   2,3        \\ \hline
	\end{tabular}
\end{table}
\end{center}

Since we have chosen $p \geq 19$ and there is no $p$-torsion in the subgroups of interest, the mod $p$ eigenforms must lift to complex ones. Using the available Magma package for Bianchi modular forms, we compute these spaces of eigenforms. The implementation, due to Dan Yasaki, is based on an algorithm of Gunnels \cite{gun99} for computing the Voronoi polyhedron and provides a replacement for the modular symbol algorithm used by Cremona to compute the action of the Hecke operators in \cite{cremona84}.

The dimension of the respective cuspidal spaces for $\mathbb Q(i), \mathbb Q(\sqrt{-7})$ is $0$ and the dimension of the cuspidal space of level $\mathfrak a^8$ for $\mathbb Q(\sqrt{-2})$ is $6$.

Since there are no eigenforms at the predicted levels for $K=\mathbb Q(i)$ or $K= \mathbb Q(\sqrt{-7})$, the proof of Theorem \ref{main} in those cases is complete.

Let us now focus on $K= \mathbb Q(\sqrt{-2})$. As previously noted, the total dimension of the cuspidal space is $6$, there are only $2$ newforms at this level. On the levels $\mathfrak a^{i}$, $1 \leq i \leq 7$ there is only newform at level $\mathfrak a^5$, where $\mathfrak a$ is the only prime ideal above $2$.  All these forms have integer Hecke eigenvalues.

%Written in LMFDB label these are 
%\begin{table}[H]
%	\centering
%	\caption{New cuspidal subspace of level $\mathcal N = (\mathfrak a)^8$}
%	\label{newsub2}
%	\begin{tabular}{|c|c|}
%		\hline
%		LMFDB label & Hecke eigenvalues at the first few primes in classical order  \\ \hline
%		256.1-a &   0, -2, -2 ,2 ,2, -2, -2, -2, -2, -6, -6, -6, -6, 2, -14, -14 \\ \hline
%		256.1-b &   0, 2, 2, -2, -2, -2, -2, 2, 2, -6, -6, -6, 6, 6, 2, 14, 14 \\ \hline
%	\end{tabular}
%\end{table}

At the moment of writing this preprint LMFDB \cite{lmfdb} did not include a database for Bianchi modular forms, so the author computed the eigenvalues independently using the available Magma packages. The results match perfectly with the ones that were since then made available in the LMFDB by John Cremona. 

In the spirit of Langland's philosophy, there should be a motif attached to the cuspidal newforms above. This motif is not always an elliptic curve, as one can see in \cite[Theorem 5]{cremonatwist} and \cite{AFreitasSiksek}*{Section 4}. We prove Theorem 1.1 here by first proving that these newforms correspond to elliptic curves, but as the referees pointed out, one could finish now the proof of this theorem via the following remark.

\begin{remark}  The Bianchi newforms above do not depend on the exponent $p$ nor on the unknown solution $(a,b,c) \in \mathcal O_K^3$ of \eqref{fermeq}. 
	If one of them comes from a solution to \eqref{fermeq} via the above discussion, then its Hecke eigenvalue at $(\sqrt{-2}-1) \subseteq \mathcal O_K$ is congruent modulo $p$ to $\Tr \overline{\rho}_{E,a,b,c}(\Frob_{(\sqrt{-2}-1)})$ and one obtains an upper bound on $p$ which completes the proof of Theorem 1.1. The reader interested mostly in the main result can directly consult the relevant numerical values on the last page of this article. 
\end{remark}

We say that an elliptic curve $C/K$ of conductor $\mathcal N \subseteq \mathcal O_K$, corresponds to a cuspidal Bianchi modular form $F$ for $\Gamma_0(\mathcal N)$ if the $L$-series $L(C/K,s)$ is the Mellin transform $L(F,s)$ of $F$.

If $C/K$ doesn't have complex multiplication and in addition $L(C,s)$ together with its character twists have analytic continuation and satisfies the functional equation, then by results of \cite{jacquet} it follows that there exists a Bianchi modular form for $\Gamma_0(\mathcal N)$, that is a newform, such that its
	Mellin transform is equal to $L(C,s)$.

We will make all of the above explicit for our curves.
Looking in the LMFDB database we found three elliptic curves that are good candidates for such a correspondence with our three newforms. By checking the traces of Frobenius at the first few primes, we notice that the curve

$$E_1 : y^2=x^3+x,$$
with $j(E_1)=1728$ and LMFDB label 2.0.8.1-32.1-a2 should correspond to the newform at level $\mathfrak a^5$, i.e. the Bianchi modular form 2.0.8.1-32.1-a, if we want to stay in the language of the LMDFDB.

Similarly,
$$E_{2} :  y^2=x^3+x^2+x+1,$$
with  $j(E_2)=128$ and LMFDB label 2.0.8.1-256.1-a1 should correspond to the Bianchi modular form 256.1-a and
$$E_3 : y^2=x^3-x^2+x-1,$$
with $j(E_3)=128$ and LMFDB label 2.0.8.1-256.1-b1, should correspond to the Bianchi modular form 256.1-b.

Since the curves $E_1, E_2$ and $E_3$ are base changes from elliptic curves defined over $\mathbb Q$, by the celebrated modularity theorem \cite{modularity2001}, we know that their $L$-functions have analytic continuation.

In this very fortunate situation, we can establish the desired connection between these curves defined over $K$ and the Bianchi modular forms mentioned above. Instead of using the Faltings-Serre method, we are going to use the theory of lifting classical modular forms from $\mathbb Q$ to forms of imaginary quadratic fields with discriminant. A summary of the theory in this special case is given in \cite{cremonatwist}*{Section 4}.

An aspect of this theory worth mentioning is the following.  Suppose that $f$ is a cuspidal modular form (in the classical sense) such that $f$ does not have complex multiplication by $K$ (i.e. $f \otimes \chi \neq f$, where $\chi$ is the quadratic character attached to $K/ \mathbb Q$). Then, the lift $F$ of $f$ is cusp form for $K$ for $\Gamma_0( \mathfrak N)$, where $\mathfrak N$ is an ideal of $\mathcal O_K$ supported only on the primes of $K$ that divide $D$ (the discriminant of the quadratic extension) and the level of the cusp form $f$ we started with. If $F$ is the lift of a newform $f$, then it is also the lift of $f \otimes \chi$ and of no other cuspidal newform over $\mathbb Q$.

In terms of $L$-series we have
$$L(F,s)= L(f,s) L(f \otimes \chi, s).$$

\begin{lemma} Let $K$ be a quadratic imaginary field of discriminant $-D$ and $\chi$ the Dirichlet character associated to $K/\mathbb Q$. Suppose $f \in S_2(\Gamma_0(N_1))$ is a classical cuspidal newform of level $N_1 \in \mathbb Z_{\geq 1}$ such that $f \otimes \chi \in S_2(\Gamma_0(N_2))$ is classical cupidal form of level $N_2 \in \mathbb Z_{\geq 1}$, $f \neq f \otimes \chi$. Then $f$ and $f \otimes \chi$ lift to the same form $F$ for $K$ with level an ideal of norm $N_1N_2/D^2$.
	\end{lemma}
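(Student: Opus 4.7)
My plan is to identify $N_{K/\mathbb Q}(\mathfrak{N})$ by comparing conductors on the two sides of the $L$-series identity $L(F,s) = L(f,s)\,L(f\otimes\chi,s)$ that characterises the lift. The existence of a simultaneous lift $F$ of $f$ and $f\otimes\chi$, together with this factorisation, is already provided by the theory of base change recalled just before the statement, so the only content remaining is the numerical claim about the level.

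A preliminary reduction shows that $\mathfrak{N}$ is supported only on primes of $K$ lying above the rational primes dividing $N_1 N_2 D$: at any rational prime $p$ coprime to this product, the automorphic representation $\pi_f$ of $\GL_2(\mathbb A_{\mathbb Q})$ attached to $f$ is unramified at $p$, the character $\chi_p$ is unramified, and $K_{\mathfrak p}/\mathbb Q_p$ is unramified for every $\mathfrak p\mid p$; hence the base change $\Pi$ corresponding to $F$ is unramified at all primes of $K$ above $p$. The key step is then to match the functional equations. A weight-two Bianchi newform $F$ of level $\mathfrak{N}$ over an imaginary quadratic field of absolute discriminant $D$ has completed $L$-function
$$\Lambda(F,s) \;=\; \bigl(D^{2}\, N_{K/\mathbb Q}(\mathfrak{N})\bigr)^{s/2}\,(2\pi)^{-2s}\,\Gamma(s)^{2}\, L(F,s),$$
satisfying $\Lambda(F,s)=\varepsilon_{F}\,\Lambda(F,2-s)$, so that its conductor is $D^{2}N_{K/\mathbb Q}(\mathfrak{N})$. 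On the other hand, multiplying the classical weight-two functional equations for $f\in S_2(\Gamma_0(N_1))$ and $f\otimes\chi\in S_2(\Gamma_0(N_2))$ yields the completed product
$$\Lambda(f,s)\,\Lambda(f\otimes\chi,s)\;=\;(N_1 N_2)^{s/2}\,(2\pi)^{-2s}\,\Gamma(s)^{2}\, L(f,s)\, L(f\otimes\chi,s),$$
with identical archimedean factor $\Gamma(s)^2$. Since $L(F,s)=L(f,s)L(f\otimes\chi,s)$, equating the invariant conductors of the two completed $L$-functions gives $D^{2}N_{K/\mathbb Q}(\mathfrak{N})=N_1 N_2$, as desired.

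The principal technical point I would need to pin down carefully is the appearance of the factor $D^{2}$ in the conductor of $\Lambda(F,s)$; this is the standard analytic conductor $|d_K|^{n}\cdot N_{K/\mathbb Q}(\mathfrak{N})$ for a cuspidal $\GL_n$ representation over $K$ specialised to $n=2$, and it is classical — see for instance the account in Friedberg or the general Hecke/Jacquet theory for $\GL_2$ over a number field. As a fallback, one can prove the same conclusion locally: at rational primes $p\nmid D$ (necessarily unramified in $K$, whether split or inert), twisting by the unramified $\chi_p$ preserves conductor exponents and the local base change of $\pi_{f,p}$ has conductor exponent equal to that of $\pi_{f,p}$, so the contribution to both sides matches; at the ramified primes $p\mid D$ one uses the standard local base-change conductor relation $f(\pi_{f,p})+f(\pi_{f,p}\otimes\chi_p)=f(\Pi_{\mathfrak p})+2\,v_p(\mathfrak d_{K/\mathbb Q})$ to absorb exactly the missing factor of $D^{-2}$.
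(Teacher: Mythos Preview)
Your proposal is correct and follows essentially the same route as the paper: both arguments take the factorisation $L(F,s)=L(f,s)\,L(f\otimes\chi,s)$ as given by the theory summarised before the lemma, and then read off $N_{K/\mathbb Q}(\mathfrak N)=N_1N_2/D^2$ by comparing the conductors appearing in the functional equations for $\Lambda(F,s)$ and for $\Lambda(f,s)\Lambda(f\otimes\chi,s)$. Your normalisation $\Lambda(F,s)=(D^2 N(\mathfrak N))^{s/2}(2\pi)^{-2s}\Gamma(s)^2 L(F,s)$ is in fact the more careful one (the paper's single $\Gamma(s)$ should be $\Gamma(s)^2$, as the product $\Lambda(f,s)\Lambda(f\otimes\chi,s)$ makes clear), and your supplementary local argument via base-change conductor relations is a nice alternative that the paper does not pursue.
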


\begin{proof} We only have to prove the statement about the norm of level of $F$ and this is a relatively easy exercise which makes use of the functional equations.
	
	The completed $L$ functions $\Lambda(f,s)= (2 \pi)^{-s} N_1^{s/2} \Gamma(s) L(f,s)$ and $\Lambda(f \otimes \chi,s) = (2 \pi)^{-s} N_2^{s/2} \Gamma(s) L(f \otimes \chi,s)$ satisfy functional equations
	$$\Lambda(f,s)= \pm \Lambda(f,2-s) \text{ and } \Lambda(f \otimes \chi,s)= \pm \Lambda(f \otimes \chi,2-s).$$
	It is known (see, for example, page 414 of \cite{cremonatwist} or the discussion in \cite{crewhit}) that the Mellin transform of $F$,  $$\Lambda(F,s)=D^{s-1}(2 \pi)^{-2s} \Gamma(s) L(F,s)$$ is an entire function of $s$ with functional equation
	\begin{equation} \label{fcteq} \Lambda(F,s) = \pm N(\mathfrak N)^{1-s} \Lambda(F,s),\end{equation}
	where $\mathfrak N \subset \mathcal O_K$ is the level of the newform $F$. 
	
	One can see that
	$$\Lambda(F,s)= D^{s-1} (N_1N_2)^{-s/2} \Lambda(f,s) \Lambda(f \otimes \chi,s).$$
	Now if we make the substitution $s \leftrightarrow 2-s$ and use the functional equations we get
	$$\Lambda(F,2-s)= \pm D^{1-s}(N_1N_2)^{\frac{s-2}{2}} \Lambda(f,s) \Lambda(f \otimes \chi,s) \Leftrightarrow$$
	$$\Lambda(F,2-s)= \pm D^{1-s}(N_1N_2)^{\frac{s-2}{2}} \cdot (N_1N_2)^{s/2} D^{1-s} \Lambda(F,s).$$
	Rearranging the terms, we get
	$$\Lambda(F,s)= \pm \left[ \left( \frac{N_1N_2}{D^2} \right)  \right]^{1-s} \Lambda(F,2-s)$$
	and comparing with \eqref{fcteq} we finish the proof. 
	
	\end{proof}

We should remind the reader that when we say that an elliptic curve corresponds to a newform both classical or over $K$, we actually mean that they have matching $L$-functions.

The rational elliptic curve $E_1/ \mathbb Q$ corresponds to the cuspidal modular form
$f_1(z) = q -2q^5 - 3q^9 +O(q^{10})$
of weight $2$ and level $32$ and coefficient field $\mathbb Q$.

If we let $\chi$ be the character associated to the quadratic imaginary extension $K/ \mathbb Q$ and twist $f_1$ by this character, we get
$f_1 \otimes \chi(z) = q +2q^5 -3q^9 +O(q^{10}),$
a rational cuspidal modular form of the same weight. The level of $f_1 \otimes \chi$ is $64$ and its $L$-function matches the one of the rational elliptic curve $E_1^{\chi}$ (the twist of $E_1$ by $\chi$). 

Using the previous lemma, we know that the holomorphic cusp forms $f_1$ and $f_1\otimes \chi$ lift to $K$ the same newform $F_1$ over $K$ of level with norm $32$. But there is just one ideal of norm $32$ in $\mathcal O_K$, namely $\mathfrak a^5$. We computed that there is only one newform at this level, so this must be our $F_1$. Thus,

% As $K$ has class number $1$, the lift $F$ can be described in terms of Fourier coefficients. If $f_1(z)= \sum_{n=1}^{\infty} a_n \cdot q$, then the Fourier coefficients of $F_1$ are
%
%$$c(\pi)= \left\{ \begin{array}{llll} c(\overline \pi)=a_p & \text{if} & \chi(p)= +1 & \text{ i.e. } (p) = (\pi)(\overline \pi) \text{ splits in } \mathcal O_K; \\
%a_p^2-2p & \text{if} & \chi(p)= -1 & \text{ i.e. } (p) = (\pi) \text{ is inert in } \mathcal O_K; \\
%a_p & \text{if} & \chi(p)= 0 & \text{ i.e. } (p) = (\pi)^2 \text{ ramifies in } \mathcal O_K; \end{array} \right. $$
%
%In terms of $L$-series we have
$$L(F_1,s)= L(f_1,s)L(f_1 \otimes \chi,s),$$
By definition,
$$L(E_1/K,s)=L(E_1,s)\cdot L(E_1^{\chi},s),$$
therefore
$$L(E_1/K,s) = L(f_1,s) \cdot L(f_1 \otimes \chi,s)=L(F,s).$$

In exactly the same way, we prove that $E_2$ and $E_3$ correspond to newforms with level of norm $256$. Therefore, the level of these newforms is $\mathfrak a^8$. There are 2 newforms at this level, and they have LMFDB  label 2.0.8.1-256.1-a and 2.0.8.1-256.1-b. By looking at the first few Hecke eigenvalues, we observe that $E_2$ corresponds to the 1-a Bianchi modular form and $E_3$ corresponds to the other. 

\begin{definition}
	Given two elliptic curves $E$ and $E'$, defined over a number field $K$,  and some rational prime number $p$, we write that
	$E \sim_p E'$ if their corresponding mod $p$ Galois representations $\overline{\rho}_{E}, \overline{\rho}_{E'}: G_K \to \GL2(\mathbb F_p)$ are isomorphic. 
	\end{definition}

Denote by $\overline{\rho}_{E_i}$ the mod $p$ Galois representations associated to $E_i$ for all $1 \leq i \leq 3$.  
So far, we have used Conjecture \ref{serreconj} to prove that from $(a,b,c) \in \mathcal O_K$, a non-trivial solution to \ref{fermeq}, one obtains an elliptic curve $E_{a,b,c}$. The mod $p$ Galois representation of this curve has the property there exists $i \in \{1,2,3 \}$  such that for almost all prime ideals  $(\pi) \subseteq \mathcal O_K$,
$\Tr \overline{\rho}_{E,a,b,c}(\Frob_{(\pi)}) = \Tr \overline{\rho}_{E_i}(\Frob_{(\pi)}).$

Since the Frobenius elements are dense and $\overline{\rho}_{E,a,b,c}$ is irreducible, Brauer-Nesbitt theorem gives that $\overline{\rho}_{E,a,b,c}$ and $\overline{\rho}_{E_i}$ are isomorphic. 

In order to finish the proof of Theorem \ref{main}, we have to eliminate all of the following possibilities
\begin{equation} \label{curve1} E=E_{a,b,c} \sim_p E_1,\end{equation}
\begin{equation} \label{curve2} E=E_{a,b,c} \sim_p E_2, \end{equation}
or that 
\begin{equation} \label{curve3} E = E_{a,b,c} \sim_p E_3 \end{equation}

The huge advantage now is that the curves $E_1$, $E_2$ and $E_3$ do not depend on the exponent $p$ nor on the unknown solution $(a,b,c) \in \mathcal O_K^3$ of \eqref{fermeq}.

It's very easy to see that \eqref{curve1} can't happen. This is because $E_1$ has complex multiplication and this imposes extra restrictions on its mod $p$ representation $\overline{\rho}_{E_1}.$ In particular, the latter is not surjective, but we proved that $\overline{\rho}_{E,a,b,c}$ is surjective.

Now we restrict our attention to \eqref{curve2} and \eqref{curve3}. Notice that the ideal $(3) \mathcal O_K$ splits as $(3)=(\sqrt{-2} + 1 )(\sqrt{-2} -1)$. We denote by $\mathfrak m_1 = (\sqrt{-2} + 1 )$. 

Suppose that $E=E_{a,b,c}$ has good reduction at $\mathfrak m_1$. The fact that $E/K$ has full $2$-torsion implies that $4 | \# E(\mathcal O_K / \mathfrak m_1)$. The Hasse bounds  $1 \leq  \#E(\mathcal O_K / \mathfrak m_1) \leq 7$ imply that $\# E(\mathcal O_K / \mathfrak m_1)=4$.

The curves $E_2$ and $E_3$ have good reduction at $\mathfrak m_1$ and $\# E_2(\mathcal O_K / \mathfrak m_1) = 6$ and $\# E_3(\mathcal O_K / \mathfrak m_1)=2$.
If \eqref{curve2} holds, then $p | \# E_2(\mathcal O_K / \mathfrak m_1) - \# E(\mathcal O_K / \mathfrak m_1)=2$. Similarly, if \eqref{curve3} is true, then $p | \# E_3(\mathcal O_K / \mathfrak m_1) - \# E(\mathcal O_K / \mathfrak m_1)=-2$. Both lead us to contradictions with the size of the exponent $p$.

Suppose that $E$ has multiplicative reduction at $m_1$. Looking at the traces of Frobenius, we get that
$$\left\{\begin{array}{l} \eqref{curve2} \Longrightarrow \pm(3+1) = 3+1-6 \pmod{p} \text{ and } \\
\eqref{curve3} \Longrightarrow \pm(3+1)= 3+1-2 \pmod{p}
  \end{array} \right.$$
  both leading to contradictions to our assumption that $ p \geq 19$.

%    Text of article.

%    Bibliographies can be prepared with BibTeX using amsplain,
%    amsalpha, or (for "historical" overviews) natbib style.
\bibliographystyle{amsplain}
\bibliography{perf-pow}
%    Insert the bibliography data here.

\end{document}